\newtheorem{introthm}{Theorem}
\newtheorem{mythm}{Theorem}[section]
\newtheorem{mydef}{Definition}[section]
\newtheorem{mycor}{Corollary}[section]
\newtheorem{myrem}{Remark}[section]
\newtheorem{myque}{Question}[section]
\newcommand{\D}{\mathcal{D}}
\newcommand{\B}{\mathcal{B}}
\newcommand{\e}{\epsilon}
\newcommand{\ee}{\varepsilon}
\tikzset{
    ->, 
    >=stealth, 
    node distance=3cm, 
    every state/.style={thick, fill=gray!10}, 
    every edge/.append style={line width=0.25mm}, 
    initial text=$ $, 
}
\title{The cogrowth inequality from Whitehead's algorithm}
\author{Asif Shaikh}
\affil[]{asif.shaikh@fulbrightmail.org}
\date{May 4, 2025}
\begin{document}
\maketitle
\begin{abstract}
This article focuses on non-cyclic free factors $H\leq F_m$ of the free group $F_m$ with finite rank $m > 2$, and specifically addresses the implications of Ascari's refinement of the Whitehead automorphism $\varphi$ for $H$ as introduced in \cite{ascari2021fine}. Ascari showed that if the core $\Delta_H$ of $H$ has more than one vertex, then the core $\Delta_{\varphi(H)}$ of $\varphi(H)$ can be derived from $\Delta_H$. We consider the regular language $L_H$ of reduced words from $F_m$ representing elements of $H$, and employ the construction of $\B_H$ described  in \cite{DGS2021}. $\B_H$ is a finite ergodic, deterministic automaton that recognizes $L_H$. Extending Ascari's result, we show that for the aforementioned free factors $H$ of $F_m$, the automaton $\B_{\varphi(H)}$ can be obtained from $\B_H$. Further, we present a method for deriving the adjacency matrix of the transition graph of $\B_{\varphi(H)}$ from that of $\B_H$ and establish that $\alpha_H < \alpha_{\varphi(H)}$, where $\alpha_H, \alpha_{\varphi(H)}$ represent the cogrowths of $H$ and $\varphi(H)$, respectively, with respect to a fixed basis $X$ of $F_m$. The proof is based on the Perron-Frobenius theory for non-negative matrices.
\end{abstract}
{\bf Keywords:}  Cogrowth,  Regular language, Free group, Whitehead algorithm, Free factors\\
{\bf Mathematics Subject Classification – MSC2020:} 20E05, 20F69, 20F65
\section{Introduction}\label{sec:intro}

The automorphism problem for a free group $F_m$, where $m \geq 2$, addresses the question of whether there exists an automorphism $\varphi : F_m \rightarrow F_m$ such that $\varphi(w) = w'$, for any given pair of elements $w$ and $w' \in F_m$. In a seminal paper \cite{WhiteheadMR1503309}, J.H.C. Whitehead presented an algorithm to solve this problem. An element (or word) $w \in F_m$ is primitive if it belongs to some basis of $F_m$. Whitehead's algorithm is based on the below theorem. For a proof of the theorem below, the reader is referred to \cite{WhiteheadMR1503309}.

\begin{introthm}[Whitehead]\label{introWhitehead}
Let $w$ be a cyclically reduced word, which is primitive and not a single letter. Then there is a Whitehead automorphism $\varphi$ such that the cyclic length  of $\varphi(w)$ is strictly smaller than the cyclic length of $w$.
\end{introthm}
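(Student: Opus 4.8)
The plan is to derive the statement from the machinery underlying Whitehead's algorithm, after replacing the hypothesis ``$w$ is primitive'' by a statement about the cyclic lengths that occur in the $\mathrm{Aut}(F_m)$-orbit of $w$. Since $w$ is primitive, $w = \psi(x)$ for some $\psi \in \mathrm{Aut}(F_m)$ and some $x \in X$, so $w$ and the single letter $x$ lie in the same $\mathrm{Aut}(F_m)$-orbit. Every nontrivial element of $F_m$ has cyclic length at least $1$ and $x$ has cyclic length $1$, so the minimum of the cyclic length over the orbit of $w$ equals $1$; since $w$ is cyclically reduced and not a single letter, its cyclic length is at least $2$, so $w$ does not realise this minimum. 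Hence it suffices to prove the following peak-reduction statement: \emph{if a cyclically reduced word does not have minimal cyclic length in its $\mathrm{Aut}(F_m)$-orbit, then some Whitehead automorphism strictly decreases its cyclic length.}

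For the peak-reduction statement I would follow the classical route. Recall that the Whitehead automorphisms — the permutations of $X^{\pm 1}$ and the automorphisms $(A,a)$ of the second kind — generate $\mathrm{Aut}(F_m)$, so any automorphism carrying $w$ to a cyclically reduced word of smaller cyclic length factors as a product $\sigma_k\cdots\sigma_1$ of Whitehead automorphisms; the cyclic lengths of the partial products $\sigma_i\cdots\sigma_1(w)$ then form a sequence that begins at $|w|_{\mathrm{cyc}}$ and ends strictly below it, and the aim is to rearrange it so that its first step is already strictly decreasing. The computational input is the length-change formula: inspecting each cyclically consecutive pair of letters of $w$, one checks that for a second-kind automorphism $(A,a)$ the quantity $|(A,a)(w)|_{\mathrm{cyc}} - |w|_{\mathrm{cyc}}$ is determined by the Whitehead graph $\Gamma(w)$ (vertex set $X^{\pm 1}$, one edge per cyclically consecutive pair of letters of $w$), namely as the number of edges of $\Gamma(w)$ separated by the partition induced by $A$ minus a term recording the occurrences of $a^{\pm 1}$ in $w$. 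In particular, if $\Gamma(w)$ is disconnected or has a cut vertex, a suitable choice of $A$ makes this difference non-positive.

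The core of the argument is then to show that if \emph{no} Whitehead automorphism strictly decreases the cyclic length of $w$ — equivalently, once $\Gamma(w)$ is connected with no cut vertex — then $w$ already has minimal cyclic length in its orbit, contradicting the reduction above. This can be established either by Whitehead's original topological argument (realise $w$ as a system of curves on the boundary of a handlebody and run an innermost-disc / Euler-characteristic count) or by the purely combinatorial peak-reduction argument of Higgins and Lyndon, which analyses the composition of two Whitehead automorphisms and shows that at a peak of the length sequence the two flanking automorphisms may be replaced so as to lower the peak. I expect this step — the ``no cut vertex $\Rightarrow$ minimal cyclic length'' implication, i.e. controlling how two Whitehead automorphisms interact at a peak — to be the main obstacle; the reduction of the first paragraph, the fact that Whitehead automorphisms generate $\mathrm{Aut}(F_m)$, and the verification of the length-change formula are all routine. (In the vocabulary of the present paper one can phrase the conclusion directly via cores: as $w$ is primitive, $\langle w\rangle$ is a free factor of $F_m$ and its core $\Delta_{\langle w\rangle}$ is a circle with $|w|_{\mathrm{cyc}}$ edges; a circle with at least two edges is not the minimal core of a rank-one free factor, so Whitehead's theorem for free factors — the result Ascari refines in \cite{ascari2021fine} — yields a Whitehead automorphism decreasing the number of edges of the core, that is, the cyclic length of $w$.)
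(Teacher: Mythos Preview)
The paper does not give its own proof of this statement: Theorem~\ref{introWhitehead} is attributed to Whitehead \cite{WhiteheadMR1503309} in the introduction, and when it reappears as Theorem~\ref{cutvertex} it is again only recalled, not proved. So there is nothing in the paper to compare your argument against at the level of proof.

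That said, your outline is a faithful sketch of the classical route (Whitehead's original argument, or the Higgins--Lyndon combinatorial peak reduction), and you have correctly identified the substantive step: showing that absence of a cut vertex in the Whitehead graph forces minimality of cyclic length in the orbit. One small imprecision: you write that a cut vertex makes the length difference ``non-positive'', but for the theorem you need it strictly negative; the standard argument does give a strict decrease, so this is only a wording issue. Your final parenthetical, deducing the statement from the free-factor version (Theorem~\ref{Whitehead2} here), is formally correct but not an independent shortcut in this context: the paper cites that result from \cite{ascari2021fine} without proof, and Ascari's proof in turn rests on the same cut-vertex machinery you would need anyway.
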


This theorem was originally established by Whitehead using topological techniques involving three-manifolds. In a later development, Rapaport \cite{Rapaport} provided a purely algebraic proof, her approach was subsequently refined by Higgins and Lyndon \cite{HL1974}. These algebraic approaches not only confirmed Whitehead’s result but also broadened its accessibility within combinatorial group theory. This result addresses the primitivity problem, as a word w is primitive if and only if the minimal length of its orbit under automorphisms is 1.

The idea of extending Whitehead's minimization framework to finitely generated subgroups by minimizing the number of vertices in the core graph was proposed by Gersten \cite{Gersten1984}, and was rigorously developed and proven by Kalajdzievski \cite{Saso1992}. Further, Roig, Ventura, and Weil \cite{RVW2007} provided a polynomial-time algorithm through a graphical reformulation and analysis of cut-capacity in Whitehead hypergraphs. The subgroup analog addresses the free factor problem, as a subgroup $H$ is a free factor if and only if the minimum size of an element in its automorphic orbit is 1.

In a notable refinement, Ascari \cite{ascari2021fine} constructed an algorithm that ensures the reduction of the size of the core graph by collapsing certain edges. Let $\Delta_H$ be the core graph of $H \leq F_m$. It is the minimal finite, connected, and folded graph that contains all reduced loops corresponding to words in $H$, capturing the essential structure of $H$ without any redundant paths (see Section \ref{sec:Schreier_core} for the formal definition).  This result is formalized in the following theorem:

\begin{introthm}\label{introascari} \textnormal{(Theorem \ref{Whitehead2}) }
Let $H \leq F_m$ be a free factor and suppose $\Delta_H$ has more than one vertex. Then there is a Whitehead automorphism $\varphi$ such that $\Delta_{\varphi(H)}$ has strictly fewer vertices and strictly fewer edges than $\Delta_H$. Additionally, the automorphism $\varphi$ can be chosen in such a way that $\Delta_{\varphi(H)}$ can be obtained from $\Delta_H$ by collapsing certain edges of $\Delta_H$.
\end{introthm}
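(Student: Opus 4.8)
The plan is to reduce the statement to the existence of a single complexity-decreasing Whitehead move and then to show that this move is realized by collapsing edges of $\Delta_H$. First I would fix the graph-level description of a Whitehead automorphism $\varphi = \varphi_{(A,a)}$, specified by an oriented letter $a \in X \cup X^{-1}$ and a subset $A \subseteq X \cup X^{-1}$ with $a \in A$ and $a^{-1}\notin A$: on core graphs, $\varphi$ induces a move that inserts at each vertex $v$ of $\Delta_H$ auxiliary $a$-labelled half-edges, dictated by the labels of the half-edges at $v$ that lie in $A$, and then folds and prunes the resulting labelled graph to produce $\Delta_{\varphi(H)}$. Since $H$ is a free factor it has a complementary free factor, so --- using that, by Whitehead's theorem, $\mathrm{Aut}(F_m)$ is generated by Whitehead automorphisms --- the graph $\Delta_H$ is joined by a finite sequence of Whitehead moves to the rose with $\mathrm{rk}(H)$ petals. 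It therefore suffices to exhibit one Whitehead automorphism $\varphi$ with $\Delta_{\varphi(H)}$ strictly simpler than $\Delta_H$ and then to identify which move it is.

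To produce such a $\varphi$, I would run Whitehead's peak-reduction lemma with the complexity being the number of edges of the core graph. Because $\varphi$ preserves $\mathrm{rk}(H)$ and, for connected graphs, the number of edges and the number of vertices differ by a constant depending only on the rank, this complexity decreases exactly when the number of vertices does; its minimum over the $\mathrm{Aut}(F_m)$-orbit of $H$ equals $\mathrm{rk}(H)$ and is attained precisely on roses. Hence, if $\Delta_H$ has more than one vertex it is not of minimal complexity, and peak reduction supplies a single Whitehead automorphism $\varphi$ that strictly lowers it. It remains to arrange that such a move is a pure collapse: I would analyze the optimal subset $A$ and observe that the change in the edge count produced by $\varphi_{(A,a)}$ is controlled, through the folding step, by a cut-type count of the half-edges of $\Delta_H$ straddling the partition $\{A,\ (X\cup X^{-1})\setminus A\}$; a strict decrease forces this partition to cut the Whitehead hypergraph of $H$ in a restricted way, and under that restriction the auxiliary $a$-half-edges inserted at each vertex all fold against $a$-edges already present. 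The net effect of the move on $\Delta_H$ is then exactly the collapse of a forest of edges, which drops the numbers of vertices and edges by the same positive amount, giving the two strict inequalities; the ``additionally'' clause is just a restatement of this.

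The step I expect to be the main obstacle is tying the free-factor hypothesis to the shape of the optimal move, and it enters twice. For a general subgroup, a core graph with many vertices can already be Whitehead-minimal --- the core graph of a cyclically reduced non-primitive element is an example --- so the free-factor assumption is what pins the minimum to a rose and forbids a non-rose $\Delta_H$ from being minimal. The same hypothesis, through the complementary free factor, should also furnish a cut of the Whitehead hypergraph across which no hyperedge passes nontrivially, and this is precisely what makes the decreasing move a collapse rather than a genuine fold that creates new edges; establishing this dichotomy is the technical core. A secondary difficulty is the bookkeeping when $\Delta_H$ carries several $a$-labelled edges, or when a vertex meets many half-edges with labels in $A$: one must check that the folds annihilate every inserted half-edge, that the collapse keeps $\Delta_H$ connected, and that no valence-one vertices appear --- here $m > 2$ and the fact that every vertex of $\Delta_H$ has valence at least two rule out the degenerate cases.
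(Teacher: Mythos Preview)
The paper does not prove this statement itself; it is quoted from Ascari~\cite{ascari2021fine} and restated in Section~\ref{sec:ascari} as the chain Theorem~\ref{cutvertex2} $\to$ Theorem~\ref{Whitehead2} $\to$ Theorem~\ref{finesse2}, with the collapse clause coming from Ascari's Lemma~5.16 (invoked just before Theorem~\ref{thm:B_from_BH}). The argument recalled there runs in the opposite direction from yours: one first proves that a free factor whose core has more than one vertex has a \emph{cut vertex} in its Whitehead graph (Theorem~\ref{cutvertex2}); from that cut vertex one writes down an explicit pair $(A,a)$ for which, at every vertex $v$ of $\Delta_H$, the label set $L_v$ lies in one of the three configurations of Theorem~\ref{finesse2}; and that trichotomy is exactly what makes the move a pure edge-collapse. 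So Ascari goes \emph{cut vertex $\Rightarrow$ trichotomy $\Rightarrow$ collapse}, constructing the good move directly, whereas your plan is \emph{non-minimality $\Rightarrow$ peak reduction $\Rightarrow$ some decreasing move $\Rightarrow$ collapse}.

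Your first two arrows are fine (this is Gersten's extension of Whitehead to subgroups), but the last arrow is where the plan is thin. You write that ``a strict decrease forces this partition to cut the Whitehead hypergraph of $H$ in a restricted way, and under that restriction the auxiliary $a$-half-edges inserted at each vertex all fold against $a$-edges already present.'' That does not hold for an arbitrary strictly decreasing Whitehead move: the change in edge count is a signed sum over the vertices of $\Delta_H$, and a negative total only says that more vertices fall into case~\ref{thm:case3} of Theorem~\ref{finesse2} than sprout a fresh $a$-edge, not that none sprout. What Ascari proves is that among the decreasing moves one can \emph{choose} one satisfying the trichotomy, and the input to that choice is the cut-vertex structure of the Whitehead graph, not the bare existence of a decrease. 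You correctly flag this as the technical core and even name the right object (a partition no hyperedge straddles), but your plan tries to read the cut off of the optimal decreasing move rather than to produce it from the free-factor hypothesis; the former direction is not obviously valid, while the latter is precisely Theorem~\ref{cutvertex2}. To make the outline go through, invert that last step: use the complementary free factor to exhibit the cut vertex first, and then build the collapsing $(A,a)$ from it as in Theorem~\ref{finesse2}.
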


The significance of this result lies in its production of the simplified core graph for the image subgroup, achieved by reducing both the number of vertices and edges. This controlled reduction opens up intriguing possibilities for studying the dynamical properties of the subgroup, particularly in terms of its cogrowth and entropy.

An important aspect of subgroup dynamics in free groups is the study of cogrowth, which refers to the growth rate of the number of reduced words in a subgroup relative to the group. The relationship between cogrowth and the spectral radius of a simple random walk on the Schreier graph of a subgroup, as first introduced by Grigorchuk \cite{MR599539Gri1980}, has become a fundamental tool for understanding the asymptotic and probabilistic properties of the subgroup via its Schreier graph. In this context, minimizing the size of the core of subgroup can offer valuable insights into both its cogrowth and the related entropy, which reflect the subgroup's structural properties. This motivates the results discussed in this paper.

We first extend Ascari’s result to the ergodic automaton $\B_H$ introduced in \cite{DGS2021}, which recognizes the language $L_H$ of reduced words in $F_m$ that represent elements of $H$. While \cite{DGS2021} uses the notation $\widehat{\D}_H$ for this automaton, we adopt the simplified notation $\B_H$. Ascari considers the case where $H$ is a free factor of $F_m$, but for our purposes, we require $H$ to be a non-cyclic free factor, i.e., $rk(H) > 1$. This ensures that the automaton $\B_H$ is connected and the cogrowth of $H$ is strictly greater than 1, both of which are essential for our spectral comparison. We next describe how the automaton $\B_H$ associated with $H$ changes when we apply $\varphi$ to $H$.

\begin{introthm}\label{introshai1} \textnormal{(Theorem \ref{thm:B_from_BH})}
Let $\varphi$ be the automorphism given by Theorem \ref{introascari}. Then the ergodic automaton $\B_{\varphi(H)}$ that recognizes $L_{\varphi(H)}$ can be obtained from $\B_H$ by collapsing certain edges of $\B_H$.
\end{introthm}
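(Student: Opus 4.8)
The plan is to transport the edge‑collapse $\Delta_H \to \Delta_{\varphi(H)}$ supplied by Theorem~\ref{introascari} through the construction $\Delta \mapsto \B_\bullet$ of \cite{DGS2021}. First I would recast that construction in the form convenient here: $\B_H$ is a trim model of the automaton reading reduced paths in the pointed folded labelled graph $\Delta_H$. Its states are the oriented edges of $\Delta_H$ together with a start state attached at the basepoint; from the state corresponding to an oriented edge $e$ (``having just traversed $e$'', hence sitting at the terminal vertex of $e$ with last letter $\ell(e)$) the transition on a letter $c\in X\cup X^{-1}$ is, whenever $c\neq\ell(e)^{-1}$ and $\Delta_H$ has a (necessarily unique, by foldedness) oriented edge $f$ with initial vertex the terminal vertex of $e$ and $\ell(f)=c$, the state $f$; a state $e$ is accepting exactly when the terminal vertex of $e$ is the basepoint, as is the start state. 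The point of writing it this way is that $\B_H$ is completely determined by $\Delta_H$ and the basepoint, and an oriented edge of $\Delta_H$ simultaneously names a state and, via the constraint $c\neq\ell(e)^{-1}$, the local no‑backtracking data; this is what lets a sufficiently structured quotient of $\Delta_H$ be promoted to a quotient of $\B_H$.

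Next I would read off from the proof of Theorem~\ref{introascari} the exact combinatorial shape of the collapse: $\varphi$ is chosen so that there is a surjection $\pi\colon\Delta_H\to\Delta_{\varphi(H)}$ contracting a subforest $C$ of $\Delta_H$ — obtained there by performing on $\Delta_H$ the ``blow‑up and fold'' surgery corresponding to the Whitehead automorphism $\varphi$ (determined by a letter, say $a$, and a set of generators) and observing that, for the optimal choice of $\varphi$, this surgery has the net effect of contracting $C$. I would isolate the features of $C$ and $\pi$ that make the transport work: that $\pi$ is a labelled graph morphism onto the folded core graph $\Delta_{\varphi(H)}$ carrying basepoint to basepoint; that no two oriented edges of $C$ are mutually reverse; and that the only identifications $\pi$ makes outside $C$ are the foldings along $a^{\pm1}$ already dictated by the move, so that $\pi$ creates no fold not forced by $\varphi$. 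These are exactly the hypotheses under which the reduced‑path construction of the first paragraph behaves functorially.

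I would then define the promised collapse of $\B_H$: contract every edge of the transition graph of $\B_H$ whose traversal in $\Delta_H$ traverses an edge of $C$, and, more generally, identify two oriented‑edge states $e,e'$ whenever $\pi(e)=\pi(e')$, keeping the start state a singleton (with the obvious bookkeeping when the basepoint is an endpoint of a contracted edge). The bijection ``oriented edges of $\Delta_{\varphi(H)}$ $\leftrightarrow$ oriented edges of $\Delta_H$ not in $C$, modulo $\pi$'' then identifies the state set of the quotient with that of $\B_{\varphi(H)}$, and one must check the transitions agree: for a state $e$ and a letter $c\neq\ell(e)^{-1}$, if the $\B_H$‑transition lands on an edge $f\notin C$, then $\pi(f)$ is, by the labelled‑morphism and foldedness properties, precisely the $\B_{\varphi(H)}$‑transition out of $\pi(e)$ on $c$; the cases needing care are those where $f\in C$ or where the contraction has altered the local picture at the terminal vertex of $e$, and this is exactly where the structural features of $C$ extracted above, together with the way $\varphi$ moves the letter $a$, are used to see that the quotient transition is still the correct one. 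Granting this, the quotient is a deterministic automaton equal state‑for‑state and transition‑for‑transition to the reduced‑path automaton of $\Delta_{\varphi(H)}$, i.e.\ to $\B_{\varphi(H)}$; it is ergodic because a quotient of a strongly connected transition graph is strongly connected and reachability and co‑reachability of states are preserved; and it recognizes $L_{\varphi(H)}$ because it is, by construction, the automaton \cite{DGS2021} associates to $\Delta_{\varphi(H)}$.

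The step I expect to be the real obstacle is the verification in the third paragraph that the induced identification is a genuine automaton congruence and yields $\B_{\varphi(H)}$ on the nose, rather than merely some automaton for $L_{\varphi(H)}$. The difficulty is concentrated where the no‑backtracking data of $\B_H$ meets the contracted forest: contracting a whole subforest of $a$‑edges — not a single edge — could a priori cascade, merge a state with its own reverse, or turn a legal step into a backtracking step, and ruling this out is precisely where the optimality of $\varphi$ in Theorem~\ref{introascari}, which constrains how $C$ sits relative to the Whitehead letter $a$, must be invoked. A smaller but genuine nuisance is the interface with the start and accepting structure when the basepoint lies on a contracted edge, and the bookkeeping needed to phrase the conclusion literally as ``$\B_{\varphi(H)}$ is obtained from $\B_H$ by collapsing certain edges of $\B_H$''.
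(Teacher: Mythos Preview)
Your plan is essentially the paper's: lift Ascari's collapse $\Delta_H\to\Delta_{\varphi(H)}$ to $\B_H$ by deleting the states corresponding to the collapsed oriented edges and re-routing their incoming transitions to their outgoing targets. Two adjustments are worth making. First, your model of $\B_H$ with a single adjoined start state is not the one in \cite{DGS2021}: there $I_H=F_H=\{(v_1,x_i^{\e}):x_i^{\e}=\widehat\mu(e),\,t(e)=v_1\}$ is a set of several states and $\B_H$ has homogeneous ambiguity $|I_H|-1$, so your basepoint bookkeeping must be redone in that language (the paper handles this explicitly, replacing the removed initial state $(q,a^{\e})$ by the $d$ states $o(e_i)$). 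Second, the cascade of folds you anticipate in your last paragraph does not occur: the paper isolates as Proposition~\ref{pro:labels} the fact that for each collapsed edge $e$ one has $L_{o(e)}\cap L_{t(e)}=\emptyset$, a direct consequence of $a$ being a cut vertex together with Ascari's trichotomy (Theorem~\ref{finesse2}), so the quotient of $\widehat\Delta_H$ is already folded and your quotient of $\B_H$ is deterministic on the nose with no further identifications. With that proposition in hand your direct state-by-state and transition-by-transition match with $\B_{\varphi(H)}$ goes through; the paper instead finishes by proving $L(\B)=L_{\varphi(H)}$ via the word-level action of $\varphi$ and then invoking Myhill--Nerode minimality, but either closing argument works once $L_{o(e)}\cap L_{t(e)}=\emptyset$ is established.
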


Let $M = M_H$ and $M_1 = M_{\varphi(H)}$ denote the adjacency matrices of the transition diagrams corresponding to $\B_H$ and $\B_{\varphi(H)}$, respectively. The irreducibility of $M$ and $M_1$ follows from the ergodicity of $\B_H$ and $\B_{\varphi(H)}$. Denote by $\lambda = \lambda_H$ and $\lambda_1 = \lambda_{\varphi(H)}$ the Perron-Frobenius eigenvalues of $M$ and $M_1$, respectively. We derive the following cogrowth inequality: 

\begin{introthm}\label{introshai2} \textnormal{(Theorem \ref{thm:main_lambda<lambda1})}
Let $\varphi$ be the automorphism given by Theorem \ref{introascari}. Then $\lambda < \lambda_1.$ 
\end{introthm}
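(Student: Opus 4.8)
The plan is to turn the collapse of Theorem~\ref{thm:B_from_BH} into an explicit operation on $M$ and then compare Perron--Frobenius eigenvalues through the resolvent of the block that gets deleted. First I would order the states of $\B_H$ so that the ones which disappear on passing to $\B_{\varphi(H)}$ come last, putting $M$ in block form
\begin{equation*}
M=\begin{pmatrix}A & B\\ C & D\end{pmatrix},
\end{equation*}
where $A$ records transitions among surviving states, $B$ and $C$ those between surviving and vanishing states, and $D$ those among vanishing states. The vanishing states correspond to the (oriented) edges of $\Delta_H$ that $\varphi$ collapses; since $\varphi$ preserves the rank of $H$, these edges form a forest, so a non-backtracking walk through them has bounded length, $D$ is nilpotent, and $\lambda I-D$ is invertible with $(\lambda I-D)^{-1}=\sum_{j\ge0}\lambda^{-(j+1)}D^{j}\ge 0$ for every $\lambda>0$. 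The first step, which I expect to be the most technical and where Theorem~\ref{thm:B_from_BH} really does the work, is to read off the matrix identity
\begin{equation*}
M_1=A+B(I-D)^{-1}C ,
\end{equation*}
i.e.\ that deleting the vanishing states and inserting, between two surviving states, one edge for each non-backtracking detour through the collapsed forest reproduces exactly the transition graph of $\B_{\varphi(H)}$; in particular one checks the right-hand side is again a $0/1$ matrix, using uniqueness of reduced paths in a tree and the fact that a surviving edge is never the reverse of a forest edge.

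Granting this, the rest is a short Perron--Frobenius computation. Since $M$ is irreducible, $\lambda$ has a strictly positive eigenvector, which in block form is $\binom{p}{q}$ with $p,q>0$; the block equations give $q=(\lambda I-D)^{-1}Cp$ and hence $\bigl(A+B(\lambda I-D)^{-1}C\bigr)p=\lambda p$. Subtracting this from $M_1p=\bigl(A+B(I-D)^{-1}C\bigr)p$ and expanding both resolvents yields
\begin{equation*}
M_1p-\lambda p=B\Bigl(\textstyle\sum_{j\ge0}\bigl(1-\lambda^{-(j+1)}\bigr)D^{j}\Bigr)Cp .
\end{equation*}
Now $\lambda>1$: since $H$ has rank $\ge 2$, the set $L_H$ of reduced words (equivalently, the reduced closed walks in $\Delta_H$) grows exponentially — e.g.\ $\Delta_H$ contains a subdivided wedge of two circles whose non-backtracking matrix is a principal submatrix of $M$ of spectral radius $>1$. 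Hence every coefficient $1-\lambda^{-(j+1)}$ is positive, and with $B,C,D^{j}\ge 0$ and $p>0$ we get $M_1p-\lambda p\ge 0$.

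It remains to see $M_1p-\lambda p\ne 0$. The collapse genuinely enlarges the transition graph: it identifies the two endpoints of at least one forest edge, so some surviving state acquires a transition absent in $\B_H$, i.e.\ $B(I-D)^{-1}C\ne 0$; choosing an entry $[BD^{j_0}C]_{s,t}>0$ and using $p_t>0$ makes the $s$-coordinate of $M_1p-\lambda p$ strictly positive. Thus $M_1p\ge\lambda p$ with $p>0$ and $M_1p\ne\lambda p$, and $M_1$ is irreducible; pairing with the positive left Perron eigenvector $z$ of $M_1$ gives $(\lambda_1-\lambda)\,z^{\mathsf T}p=z^{\mathsf T}(M_1p-\lambda p)>0$, so $\lambda<\lambda_1$. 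Equivalently, $\lambda$ is the Perron eigenvalue of the nonnegative matrix $A+B(\lambda I-D)^{-1}C$, which is entrywise $\le M_1$ and strictly smaller in some entry, so strict monotonicity of the Perron eigenvalue under the irreducible matrix $M_1$ forces $\lambda<\lambda_1$. One could instead collapse the forest one edge at a time, so that $D=0$ and $M_1=A+BC$ at each stage, avoiding the nilpotency discussion at the price of having to verify that the intermediate transition matrices stay irreducible; collapsing it all at once, as above, uses only the irreducibility of $M$ and of $M_1$ already recorded in the statement.
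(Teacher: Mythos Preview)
Your argument is correct and runs dual to the paper's. The paper takes the Perron eigenvector $\overrightarrow{v}$ of $M_1$, \emph{extends} it to a test vector $\overrightarrow{u}$ for $M$ by choosing the extra $|S|$ coordinates in the interval $[\,b_{(q,a^{\e})}/\lambda_1,\,b_{(q,a^{\e})}\,]$, verifies $M\overrightarrow{u}\le\lambda_1\overrightarrow{u}$ with a strict coordinate, and applies Theorem~\ref{thm:Perron-Frobenius}. You go the other way: start from the Perron eigenvector $\binom{p}{q}$ of $M$, \emph{restrict} to the surviving block, show $M_1p\ge\lambda p$ with a strict coordinate, and finish by pairing with the left Perron vector of $M_1$. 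Your resolvent formula is more than needed here---Corollary~\ref{cor:M_partition} gives $D=O=0$ outright, so $M_1=A+BC$ is exactly Corollary~\ref{cor:M1_from_M} (each row of $U$ carries at most one $1$, and the row-addition described there is precisely the product $UZ$); the forest/nilpotency detour is therefore unnecessary, and your key step collapses to the single line $M_1p-\lambda p=(1-\lambda^{-1})BCp$, which is cleaner than the paper's three-case choice of $u_{(q,a^{\e})}$. Both routes need the relevant Perron value to exceed $1$ to produce the strict coordinate (you for $1-\lambda^{-1}>0$; the paper for $b_{(q,a^{\e})}/\lambda_1<b_{(q,a^{\e})}$), and you make this explicit via Convention~\ref{con1} ($H$ non-cyclic) while the paper leaves it implicit. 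The nonvanishing $BC\ne0$ you invoke is Remark~\ref{rem:M_M1}. In short: same Perron--Frobenius backbone, reversed direction of comparison; your Schur-complement packaging trades the paper's case analysis for an explicit matrix identity plus the side observation $\lambda>1$.
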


This inequality suggests that the Whitehead techniques, when extended to the context of automata, reveal structural and dynamical changes in the subgroup. By moving from core graphs to ergodic automata, we gain a more refined, language-theoretic perspective on these phenomena. Hence, it is natural to ask whether the automorphisms arising from Ascari’s method, or those obtained via cut-capacity arguments, yield the maximal cogrowth representative within the Whitehead automorphic orbit of a given subgroup $H \leq F_m$. More generally, 
given a finitely generated subgroup $H \leq F_m$, can one determine the Whitehead automorphism $\varphi$ of $F_m$ such that the cogrowth $\alpha_{\varphi(H)}$ is maximized over the Whitehead automorphic orbit of $H$? Is there an effective method or algorithm to find such a Whitehead automorphism? We refer to this as the Whitehead maximal cogrowth problem. To the best of our knowledge, this question remains open.

We conclude the introduction by outlining the contents of the rest of the paper. Section 2 revisits the fundamental definitions and terminologies from the theory of automata. Additionally, it presents the construction of the ergodic automaton $\B_H$ that recognizes $L_H$, as introduced in \cite{DGS2021}. In Section 3, we review Ascari's refinement of the Whitehead algorithm for subgroups, as discussed in \cite{ascari2021fine}. The proofs of Theorems \ref{introshai1} and \ref{introshai2} are completed in Section 4. Finally, we conclude with an illustrative example and a section outlining several open problems for future research.

\section{The automaton $\B_H$}
The main objective of this section is to revisit the definition of $\B_H$ from \cite{DGS2021} and discuss the key properties of the automaton $\B_H$. 
\subsection{Preliminaries}
We adopt the following conventions and terminologies. We use $X=\{x_1, \ldots, x_m\}$ to denote a fixed basis of the free group $F_m$, where the elements of $X$ and their inverses are treated as formal letters within the context of formal languages. Thus, the set of generators $\Sigma = X \cup X^{-1}$ of $F_m$ is considered as an alphabet when discussing formal languages.

We denote the set of all finite words over the alphabet $\Sigma$ as $\Sigma^*$. From an algebraic perspective, $\Sigma^*$ represents the free monoid generated by the finite set $\Sigma$. The $length$ of a word $w \in \Sigma^*$ is denoted by $|w|$ and refers to the number of letters in $w$, counting each letter as many times as it appears. Subsets of $\Sigma^*$ are commonly referred to as (formal) $languages$ over the alphabet $\Sigma$. A language $L$ is called as $regular$, if it is recognized by a finite automaton. A finite automaton $\B$ is defined as a quintuple $\B = (Q,\Sigma,\delta,I,F)$, comprising a finite set of states $Q$, an alphabet $\Sigma$, a transition function $\delta:Q\times\Sigma \rightarrow 2^Q$, a set of initial states $I \subseteq Q$, and a set of final states $F \subseteq Q$.

Let $G_{\B}$ or simply $\B$ be the transition diagram of $\B$, that is $\B$ is a labeled directed graph with vertex set $Q$ and the directed labeled edges are described by the transition function $\delta$ with labels from $\Sigma$. Namely, vertex $q$ is connected with vertex $q'$ with an edge labeled by $x \in \Sigma$, if $q' \in \delta(q, x)$. (For example, Figure \ref{fig:B_H} is a depiction of a transition diagram for an automaton.) Let $e$ be an edge in $\B$. We use $o(e), t(e)$ and $\mu(e)$ to denote origin, terminus and the label of the edge $e$, respectively. In the context of $\B$, a directed path $p=e_1\cdots e_n$ in $\B$ is called \emph{admissible} if $o(e_1) \in I$, $t(e_i) = o(e_{i+1}),$ for $i = 1,\cdots,(n-1)$, $t(e_n) \in F$. Let $w = y_1\cdots y_n$ be a word over $\Sigma$. The automaton $\B$ accepts the word $w$ if there is an admissible path $p$ in $G$ such that $\mu(p) = \mu(e_1)\cdots \mu(e_n) = w$. The language recognized by $\B$, denoted as $L(\B)$, is the set of words accepted by $\B$.

An automaton $\B$ is \emph{ergodic} if its transition diagram is strongly connected, that is, for any two states $q$ and $q' \in Q$ there exists a path connecting $q$ to $q'$. A language $L \subseteq \Sigma^*$ is \emph{irreducible} if, given two words $w_1, w_2 \in L$, there exists a word $w \in \Sigma^*$ such that the concatenation $w_1ww_2$ belongs to $L$. A regular language $L$ is irreducible if and only if it is recognized by some ergodic automaton, see Theorem 3.3.11 of \cite{Lind-Marcus}. An automaton $\B$ is \emph{unambiguous} if for every $w \in L(\B)$, there is a unique admissible path $p \in \B$ such that $\mu(p) = w$. An automaton $\B$ is \emph{deterministic}, if for each state of $Q$, all outgoing edges carry distinct labels. It is obvious that a deterministic automaton with one initial state is unambiguous. Note that $\B$ is deterministic if the codomain of $\delta$ is $\{\emptyset\} \cup Q$, that is $\delta:Q\times\Sigma \rightarrow \{\emptyset\} \cup Q$. Let $k \geq 1$. An automaton $\B$ has \emph{homogeneous ambiguity} $k$ if, for any nonempty word $w \in L(\B)$, there are exactly $k$ admissible paths $p_1,\cdots,p_k$ in $\B$ with label $w$. The $in$ degree of a vertex $v$ of the directed graph $G$ is the number of edges in the graph that have $v$ as the terminus. Similarly, the $out$ degree of a vertex $v$ of the graph is the number of edges in the graph that have $v$ as the origin. 

\subsection{The Schireier and core graph of $H$}\label{sec:Schreier_core}
The construction of $\B_H$ relies on the core $\Delta_H$ of a Schreier graph $\Gamma$ associated to $H$. We define two versions of the Schreier graph associated with $H \leq F_m$, which we denote by $\Gamma$ and $\widehat{\Gamma}$, respectively. The set of vertices of $\Gamma$ and $\widehat{\Gamma}$ is the same and is the set $V = \{ Hg \mid g \in F_m\}$ of right cosets. The set of edges $E$ of $\Gamma$ is the set $E = \{(Hg, Hgx)\mid  g \in F_m, x \in X\}$ consisting of pairs $e = (Hg, Hgx)$ of cosets. The edges are oriented and $Hg$ is the origin $o(e)$ of $e$ while $Hgx$ is the terminus $t(e)$ of $e$. Moreover, such an edge has the label $\mu(e) = x.$ Each vertex in $\Gamma$ has $m$ outgoing edges whose labels constitute the set $X$. The graph $\widehat{\Gamma}$ is obtained from $\Gamma$ by adding edges from the set $\overline{E} = \{\overline{e}\mid e \in E\}$ where $\overline{e} = (Hgx,Hg)$ if $e = (Hg, Hgx)$ and the label $\mu(\overline{e}) = \mu(e)^{-1} = x^{-1} \in X^{-1}.$ Thus $\Gamma = (V, E, \mu)$ and $\widehat{\Gamma} = (V,E\cup \overline{E},\widehat{\mu})$, where $\widehat{\mu}(e) = \mu(e)$ if $e \in E$ and $\widehat{\mu}(\bar e) = \mu(e)^{-1}$ if $\bar e \in \overline{E}$. Each vertex of $\widehat{\Gamma}$ has $2m$ outgoing edges and $2m$ incoming edges, whose labels constitute the set $\Sigma = X \cup X^{-1}$. We call $\Gamma$ the \emph{Schreier graph} and $\widehat{\Gamma}$ the \emph{extended Schreier graph} of $H$. The vertex $v_1 = H$ is the distinguished vertex, so in fact $\Gamma$ and $\widehat{\Gamma}$ are rooted graphs with root $v_1$. 

The \emph{core} $\Delta_H = (\widehat{V},E_{\Delta_H},\mu)$ is the subgraph of the Schreier graph $\Gamma$ that is defined as the union of closed paths containing the root vertex $v_1$. Thus, since the Schreier graph $\Gamma$ is connected, its core $\Delta_H$ is also connected. Let $\overline{E}_{\Delta_H} = \{\overline{e}\mid e \in E_{\Delta_H}\}$. We now define the {\it extended core} graph $\widehat{\Delta}_H = (\widehat{V},\widehat{E},\widehat{\mu})$ from the core $\Delta_H$, where $\widehat{E} = E_{\Delta_H}\cup\overline{E}_{\Delta_H}$. Observe that the extended core $\widehat{\Delta}_H$ is a subgraph of $\widehat{\Gamma}.$ We say that a labeled path  is \emph{reduced} if it does not contain adjacent edges with labels of the form $aa^{-1}$, otherwise, we say that the path is not reduced or  we say that it \emph{backtracks}. Note that paths in the graph $\widehat{\Delta}_H$ are not necessarily reduced and may backtrack. For example, a path $p = e\overline{e}$ in $\widehat{\Delta}_H$ from $v_1$ to $v_1$, where $e, \overline{e} \in \widehat{E}$ and $o(e) = v_1 = t(\overline{e}), t(e) = v' = o(\overline{e})$ is not a reduced path. It is possible to view $\widehat{\Delta}_H$ as a finite, deterministic, unambiguous automaton, with the distinguished vertex $v_1$ is serving as both the unique initial and terminal state. Let $L(\widehat{\Delta}_H)$ denote the set of all words $w = \widehat{\mu}(p)$, where $p$ is a path of $\widehat{\Delta}_H$ that begins and ends at $v_1$, that is, $L(\widehat{\Delta}_H)$ is the language recognized by the automaton $\widehat{\Delta}_H.$ Notice that these paths $p$ in $\widehat{\Delta}_H$ may or may not be reduced. Hence, not all words in the language $L(\widehat{\Delta}_H)$ are reduced. We denote by $L_H$ the language of reduced elements of a f.g. subgroup $H$ of $F_m.$ Notice that $L_H \subset L(\widehat{\Delta}_H).$ An element $w = y_1\cdots y_n \in F_m$ is called \emph{cyclically reduced} if $y_n \neq y_1^{-1}.$

{\myexa \label{example:H} Let $H = \langle yx^{-1}, yzy^{-1}zt\rangle$ be a subgroup of a free group $F_4 = F\langle x,y,z,t \rangle$. See Figure \ref{fig:coreH} for the core $\Delta_H$.}
\begin{figure}[!htb]
     \begin{subfigure}[b]{0.5\textwidth}
        \centering
         \includegraphics[width=.8\textwidth]{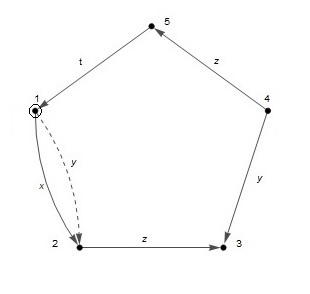}
          \caption{The core $\Delta_H$.}  
        \label{fig:coreH}
       \end{subfigure}
       \hfill
       \begin{subfigure}[b]{0.5\textwidth}
        \centering
           \includegraphics[width=.8\textwidth]{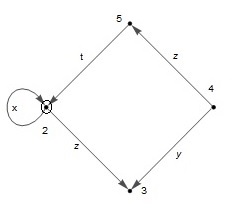}
          \caption{The core $\Delta_{\varphi(H)}$}
        \label{fig:corePhiH}
        \end{subfigure}
        \caption{The circled vertices in Figures \ref{fig:coreH} and \ref{fig:corePhiH} indicate the root vertex.}
        \label{fig:cores}
\end{figure}
{\myconv \label{con1} From now on, we will assume that $H \leq F_m$ is a non-trivial, non-cyclic finitely generated subgroup generated by $w_1,\cdots,w_k$, where $w_i, ~\forall i$ are cyclically reduced words over $\Sigma$, where $\Sigma = X \cup X^{-1}$, $X=\{x_1, x_2, \ldots, x_m\}$ and elements of the set $X$ are generators of $F_m$.}  

\subsection{Definition of $\B_H$}\label{sec:def_B_H}
We are interested in the automaton $\B_H$ because it is ergodic and recognizes the language $L_H$. Also, it provides a tool for calculating the cogrowth of $H$. Following \cite{DGS2021}, we present the definition of $\B_H$ as follows:
$$\B_H = \bigl(Q_H, \Sigma, \delta_H, I_H, F_H\bigr),$$
where 
\begin{eqnarray}
    Q_H = \left\{(v,x_i^{\e}) \mid v \in \widehat{V}, x_i^{\e} \in \Sigma, ~and~ \exists e \in \widehat{E} ~s.t.~ x_i^{\e} = \widehat{\mu}(e),t(e)=v\right\}, \label{eqn:B_H_vertex_set} \\
    I_H = F_H = \Bigl\{(v_1,x_i^{\e})\mid x_i^{\e} = \widehat{\mu}(e),t(e)=v_1\Bigr\}, \label{eqn:B_H_initial_set}\\
    \delta_H\left((v,x_i^{\e}),x_j^{\e'}\right) = \left( \delta_{\widehat{\Delta}_H}(v,x_j^{\e'}),x_j^{\e'}\right)= (vx_j^{\e'},x_j^{\e'}), \textrm{ if } x_i^{\e} \neq (x_j^{\e'})^{-1}. \label{eqn:B_H_transition}
\end{eqnarray}
We now list some of the important properties of $\B_H$ described in \cite{DGS2021}. 
{\mythm \label{thm:B_m_properties}  Let $H \leq F_m$ be as mentioned in the Convention \ref{con1}. Then the automaton $\B_H$ is minimal, ergodic, deterministic and has homogeneous ambiguity $\deg(v_1) - 1$.}
\begin{proof}
   Minimality follows from Theorem 5.8 and Lemma 5.11, ergodicity from Theorem 5.14, and the remaining properties from Proposition 5.12 of \cite{DGS2021}.
\end{proof}
\subsection{Entropy of $L_H$}\label{section-on-entropy} 
The entropy $ent(L)$ of a formal language $L$ is defined as 
\begin{equation}
     ent(L) =  \limsup_{n \rightarrow \infty} {\frac{\log(b_n)}{n}}, \label{eqn:entropy_def}
\end{equation}where $b_n = \#\left\{w \in L ~|~ |w| = n\right\}.$

According to Theorem \ref{thm:B_m_properties}, it follows that the adjacency matrix $M$ of the transition diagram of the deterministic automaton $\B_H$ is non-negative, integral and irreducible. This crucial observation allows for the application of the Perron-Frobenius theory, leading to a theorem concerning the entropy of $L_H$. For a comprehensive understanding of Perron-Frobenius theory, a detailed discussion can be found in Chapter 4 of \cite{Lind-Marcus}. We now recall the Theorem 5.18 from \cite{DGS2021}. 
{\mythm \label{thm-on-computing-entropy}
Let $H \leq F_m$ be as mentioned in the Convention \ref{con1}. Then $ent(L_H) = \log \lambda_H,$ where $\lambda_H$ is the maximal (also called Perrron Frobenius) eigenvalue of the adjacency matrix $M$ of $\B_H$.}

\subsection{Cogrowth of $H$}
Let $H \leq F_m$ be a f.g. subgroup. Let $a_n$ be the number of reduced elements of length $n$ in $H$ with respect to a fixed basis $X$ of $F_m$. The upper limit
\begin{equation}
    \alpha_H = \displaystyle \limsup_{n \rightarrow \infty} \sqrt[n]{a_n} \label{eqn:rate_cogrowth_def}
\end{equation} is called the \emph{cogrowth} of $H$ with respect to a fixed basis $X$ of $F_m$. The term cogrowth was introduced by Grigorchuk in his foundational work \cite{MR599539Gri1980} to distinguish the growth of a subgroup $H \leq F_m$ as measured within the ambient group $F_m$. This contrasts with the intrinsic growth of a group or subgroup viewed on its own. In the literature, this notion is also referred to as \emph{relative growth}, as it quantifies how the subgroup sits inside the larger group. The prefix `co'- reflects this external or comparative viewpoint.
 
Let $L_H = L(\B_H)$ be the language accepted by $\B_H$ constructed in the previous Section. Let $M$ be the adjacency matrix of the transition diagram of $\B_H$. 
Notice from Equation \eqref{eqn:entropy_def}, \eqref{eqn:rate_cogrowth_def} and Theorem \ref{thm-on-computing-entropy} that 
\begin{equation}
  \alpha_H = e^{ent(L_H)} = \lambda_H. \label{eqn:cogrowth_entropy}  
\end{equation}

\section{Whitehead’s algorithm}\label{sec:ascari}
In this section, we focus on the results obtained in \cite{ascari2021fine} regarding the refinement of Whitehead's algorithm. 
\subsection{Whitehead automorphism}
We begin our discussion by providing the definitions of the Whitehead automorphism of $F_m$ and the Whitehead graph. 
\begin{mydef}
Let $a\in\Sigma = X\cup X^{-1}$ and let $A\subseteq\Sigma\setminus\{a,a^{-1}\}$. The \emph{Whitehead automorphism} $\varphi=(A,a)$ is given by $a\mapsto a$ and
\begin{center}
$\begin{cases}
x_j\mapsto x_j & \text{if } x_j,x_j^{-1}\not\in A,\\
x_j\mapsto ax_j & \text{if } x_j\in A \text{ and } x_j^{-1}\not\in A,\\
x_j\mapsto x_ja^{-1} & \text{if } x_j\not\in A \text{ and } x_j^{-1}\in A,\\
x_j\mapsto ax_ja^{-1} & \text{if } x_j,x_j^{-1}\in A.\\
\end{cases}$
\end{center}
\end{mydef}

\begin{mydef}\label{defWhiteheadgraph}
Let $w$ be a cyclically reduced word. The \emph{Whitehead graph} of  $w$ is the undirected graph defined as follows:
\begin{enumerate}
    \item The alphabet $\Sigma$ is the vertex set.
    \item For every pair of consecutive letters in $w$, include an undirected edge between the inverse of the first letter and the second. Additionally these is also an edge connecting the inverse of the last letter of $w$ to the first letter of $w$.
\end{enumerate} 
\end{mydef}

Notice that, $w$ being cyclically reduced, in the Whitehead graph of $w$, we never have any edge connecting a vertex to itself.

\begin{mydef}\label{defcutvertex}
Let $w$ be a cyclically reduced word. A vertex $a$ in the Whitehead graph of $w$ is called a \emph{cut vertex} if it is non-isolated and at least one of the following two configurations take place:
\begin{enumerate}
    \item \label{def:cut1} The connected component of $a$ doesn't contain $a^{-1}$.
    \item \label{def:cut2}The connected component of $a$ becomes disconnected if we remove $a$.
\end{enumerate}
\end{mydef}

The notion of a cut vertex used here is a variant of the classical graph-theoretic definition, adapted to the structure of the Whitehead graph. In standard graph theory, a cut vertex is one whose removal increases the number of connected components, this is reflected in the second condition of our definition. The first condition arises from a phenomenon noted in \cite{stong1997diskbusting} and formalized in Proposition 2.2(b) of \cite{stallings1999whitehead}: a cyclically reduced word may contribute edges to multiple components of the Whitehead graph, and when a generator $a$ and its inverse $a^{-1}$ appear in different components, this indicates the possibility of a length-reducing Whitehead automorphism. Including this scenario in our definition ensures that the graph-theoretic notion of cut vertex accurately captures all algebraically meaningful situations relevant to automorphic reducibility. We recall Whitehead's theorem below. 

\begin{mythm}\label{cutvertex}
Let $w$ be a cyclically reduced word, which is primitive but not a single letter. Then the Whitehead graph of $w$ contains a cut vertex. Further, there is a Whitehead automorphism $\varphi$ such that the cyclic length of $\varphi(w)$ is strictly smaller than the cyclic length of $w$.
\end{mythm}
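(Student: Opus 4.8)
The plan is to deduce the theorem from the ``peak reduction'' machinery underlying Whitehead's original argument, using primitivity only to certify that $w$ is not already of minimal cyclic length in its automorphism orbit; since the statement asserts both the existence of a cut vertex and the existence of a length-reducing Whitehead automorphism, I would prove the latter first and read the cut vertex off from it. The first step is the observation that, $w$ being primitive, it lies in a basis of $F_m$, so there is $\psi\in\mathrm{Aut}(F_m)$ with $\psi(w)=x_1$, a single letter; hence the minimum of the cyclic length over the orbit $\{\chi(w):\chi\in\mathrm{Aut}(F_m)\}$ equals $1$. As $w$ is cyclically reduced and not a single letter, its cyclic length is $|w|\ge 2>1$, so $w$ does not realize this minimum, and in particular there is some automorphism strictly decreasing the cyclic length of $w$. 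It is here, and only here, that primitivity enters: a general cyclically reduced word may already be of minimal length in its orbit (for instance the commutator $[x_1,x_2]$ in $F_2$, whose cyclic length $4$ is minimal), and then it admits no length-reducing automorphism at all.

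Next I would invoke Whitehead's key lemma: if \emph{some} automorphism strictly decreases the cyclic length of a cyclically reduced word $w$, then already \emph{some} Whitehead automorphism $\varphi=(A,a)$ does. This is the technical heart of the proof. One writes an arbitrary automorphism as a product $\varphi_n\cdots\varphi_1$ of Whitehead automorphisms, examines the sequence of cyclic lengths of the partial images $w,\varphi_1(w),\varphi_2\varphi_1(w),\dots$, and shows that any interior local maximum (a ``peak'') in this sequence can be eliminated or pushed down by a local rewriting of the factorization; iterating, one concludes that if the final word is shorter than $w$ then a single Whitehead automorphism already shortens $w$. The quantitative input is a formula expressing $|\varphi(w)|_{\mathrm{cyc}}-|w|_{\mathrm{cyc}}$, for $\varphi=(A,a)$, purely in terms of the Whitehead graph of $w$ --- roughly, the number of edges crossing the partition of $\Sigma$ determined by $A$ and $a$, minus the degree of the vertex $a$. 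I would carry this step out following \cite{WhiteheadMR1503309}; an exposition adapted to the subgroup setting also appears in \cite{ascari2021fine}. This is the step I expect to be the main obstacle: controlling cyclic length along products of Whitehead automorphisms and verifying that peaks can always be lowered is delicate bookkeeping with the Whitehead graph, and it is the part of the argument I would least want to reprove from scratch.

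Finally, I would extract the cut vertex from the Whitehead automorphism $\varphi=(A,a)$ produced above, for which $|\varphi(w)|_{\mathrm{cyc}}<|w|_{\mathrm{cyc}}$. By the length-change formula this inequality says that the number of edges of the Whitehead graph crossing the partition determined by $(A,a)$ is strictly smaller than $\deg(a)$; in particular $\deg(a)>0$, so $a$ is non-isolated, and a short graph-theoretic argument then shows that deleting $a$ must either disconnect the connected component of $a$ or leave $a$ in a component not containing $a^{-1}$ --- that is, $a$ is a cut vertex in the sense of Definition \ref{defcutvertex}. Conversely, starting from any cut vertex one can build a weakly length-non-increasing Whitehead automorphism by letting $A$ be a suitable union of components of the Whitehead graph with $a$ deleted; this converse direction is what makes Whitehead's algorithm run, but it is not needed for the statement at hand.
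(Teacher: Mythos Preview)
The paper does not prove this theorem: it is simply recalled from the literature with the phrase ``We recall Whitehead's theorem below,'' and no argument is given; the result is attributed to \cite{WhiteheadMR1503309}, with the refined version (Theorem~\ref{finesse}) referred to \cite{ascari2021fine}. So there is no ``paper's own proof'' to compare against.

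Your outline is the standard peak-reduction proof of Whitehead's theorem and is correct in its broad strokes: primitivity forces the orbit minimum of cyclic length to be $1$, peak reduction upgrades ``some automorphism shortens $w$'' to ``some Whitehead automorphism shortens $w$,'' and the length-change formula for $\varphi=(A,a)$ then yields the cut vertex. Two small comments. First, the last step is slightly more delicate than you indicate: from $|\varphi(w)|_{\mathrm{cyc}}<|w|_{\mathrm{cyc}}$ one gets that the edge-cut determined by $A\cup\{a\}$ has size strictly less than $\deg(a)$, and one must argue carefully that this forces either the component of $a$ to miss $a^{-1}$ or the removal of $a$ to disconnect that component (i.e.\ a cut vertex in the sense of Definition~\ref{defcutvertex}); this is where one uses that $A\subseteq\Sigma\setminus\{a,a^{-1}\}$ and that every edge incident to $a$ not going to $A\cup\{a\}$ contributes to the cut. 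Second, your sketch of peak reduction is accurate but, as you yourself note, is the real work; if you are not reproving it you should cite it explicitly rather than leave it as ``I would carry this step out.'' With those caveats, your plan matches the classical argument that the paper is invoking.
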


We state a refinement of Whitehead's theorem. See Theorem 3.7 in \cite{ascari2021fine}. 
 
\begin{mythm}\label{finesse}
The automorphism in \textnormal{Theorem \ref{cutvertex}} can be chosen in such a way that every $a$ or $a^{-1}$ letter, which is added when we apply $\varphi$ to $w$ letter by letter, immediately cancels (in the cyclic reduction process).
\end{mythm}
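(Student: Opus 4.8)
Write $w=y_1y_2\cdots y_n$ with $y_i\in\Sigma$, reading the indices cyclically modulo $n$; since $w$ is cyclically reduced, the Whitehead graph of $w$ (the multigraph on vertex set $\Sigma$ having one edge $\{y_i^{-1},y_{i+1}\}$ for each cyclically consecutive pair of letters) has no loops. The plan is to disregard whichever automorphism the proof of Theorem~\ref{cutvertex} produces and to build a suitable one by hand. By the first assertion of Theorem~\ref{cutvertex} the Whitehead graph of $w$ has a cut vertex $a$. I will take $A$ to be a single connected component $C$ of the graph obtained by deleting $a$, subject to (i) $C$ being joined to $a$ by at least one edge of the Whitehead graph, and (ii) $a^{-1}\notin C$. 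Such a $C$ exists: $a$ is non-isolated (by definition of cut vertex), so some component of the graph-minus-$a$ is joined to $a$, and by the two configurations in Definition~\ref{defcutvertex} the components joined to $a$ cannot all contain $a^{-1}$ --- in configuration~\ref{def:cut1} none of them does, since they all lie inside the Whitehead-component of $a$, which excludes $a^{-1}$; in configuration~\ref{def:cut2} there are at least two such components, so at least one avoids $a^{-1}$. Put $A=C$ and $\varphi=(A,a)$; since $a,a^{-1}\notin A$, this is a genuine Whitehead automorphism.

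The first main step is to read the cyclic reduction of $\varphi(w)=\varphi(y_1)\cdots\varphi(y_n)$ off the combinatorics of $A$. By the definition of $\varphi$, each factor is $\varphi(y_i)=p_i\,y_i\,q_i$, where $p_i=a$ if $y_i\in A$ and $p_i$ is empty otherwise, while $q_i=a^{-1}$ if $y_i^{-1}\in A$ and $q_i$ is empty otherwise (in particular $p_i=q_i=\varnothing$ whenever $y_i\in\{a,a^{-1}\}$). Hence the \emph{only} letters created by applying $\varphi$ are the nonempty $p_i$ and $q_i$, and they lie precisely in the ``junction words'' $q_ip_{i+1}\in\{\varnothing,\ a,\ a^{-1},\ a^{-1}a\}$ located between consecutive original letters $y_i$ and $y_{i+1}$. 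Since $A$ is a union of connected components of the Whitehead graph minus $a$ and misses $a^{-1}$, every edge of the Whitehead graph with one endpoint in $A$ has its other endpoint in $A$ or equal to $a$. Applying this to the edge $\{y_i^{-1},y_{i+1}\}$ and going through the four cases for $q_ip_{i+1}$ shows that each created letter is immediately adjacent to an inverse: $a^{-1}a$ cancels internally; a lone $a=p_{i+1}$ forces $y_i^{-1}\notin A$ yet $y_{i+1}\in A$, hence $y_i^{-1}=a$, i.e.\ $y_i=a^{-1}$, against which the $a$ cancels; symmetrically a lone $a^{-1}=q_i$ forces $y_{i+1}=a$, against which it cancels. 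This is exactly the assertion of the theorem. A short further check --- again using $a,a^{-1}\notin A$ and that $w$ is reduced --- confirms that after all these immediate cancellations no two surviving original letters are mutually inverse, so the word obtained is already the cyclic reduction of $\varphi(w)$.

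The second step verifies that $\varphi$ does shorten $w$, so that it really is an automorphism as in Theorem~\ref{cutvertex}. Counting created letters gives $|\varphi(w)|=|w|+\sum_{b\in A}\deg(b)$, where $\deg(b)$ denotes the Whitehead-graph degree of $b$, which (because $w$ is cyclically reduced) equals the number of occurrences of $b$ plus the number of occurrences of $b^{-1}$ in the cyclic word $w$. As every edge leaving $A$ ends at $a$, we get $\sum_{b\in A}\deg(b)=2E_{AA}+E_{Aa}$, where $E_{AA}$ and $E_{Aa}$ count the edges inside $A$ and between $A$ and $a$, respectively. By the reduction analysis the cyclic reduction removes one pair $a^{-1}a$ for each edge counted in $E_{AA}$ and one (created letter, original letter) pair for each edge counted in $E_{Aa}$, i.e.\ $2(E_{AA}+E_{Aa})$ letters in all, whence $|\varphi(w)|_{\mathrm{cyc}}=|w|_{\mathrm{cyc}}-E_{Aa}<|w|_{\mathrm{cyc}}$, the strict inequality holding because $E_{Aa}\geq 1$ by requirement~(i).

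The delicate point is the bookkeeping in the second paragraph when $a$ or $a^{-1}$ genuinely occurs in $w$: then some created $a^{\pm1}$ cancel against \emph{original} $a^{\mp1}$ letters rather than against other created letters, and one must check both that this still accounts for every created letter and that it triggers no secondary cancellation among the original letters (for instance, that deleting a created $a^{-1}$ and the original $a$ just past it does not expose a new cancelling pair). The hypothesis $a^{-1}\notin A$ --- equivalently, the clean alternative ``an edge out of $A$ ends at $a$'' --- is exactly what keeps this under control, and is the reason $A$ is chosen among the connected components of the Whitehead graph minus $a$ rather than as an arbitrary subset of $\Sigma$.
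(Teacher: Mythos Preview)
Your proof is correct and complete. The paper does not actually supply its own argument for this statement; it merely records it as Theorem~3.7 of \cite{ascari2021fine} and moves on. Your construction --- taking $A$ to be a connected component of the Whitehead graph with $a$ deleted that is adjacent to $a$ but omits $a^{-1}$ --- is precisely the classical choice, and it is the same construction Ascari uses (and that underlies the subgroup version the paper later quotes as Theorem~\ref{finesse2}). Your junction analysis of $q_ip_{i+1}$, the verification that no secondary cancellation arises, and the edge count $|\varphi(w)|_{\mathrm{cyc}}=|w|_{\mathrm{cyc}}-E_{Aa}$ are all standard and correctly carried out, so there is nothing to add beyond noting that you have filled in what the paper leaves to citation.
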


\subsection{Whitehead’s algorithm for free factors}

We now recall the definition of Whitehead graph for subgroups from \cite{ascari2021fine}.

\begin{mydef}\label{Letters}
Let $G = (V,E,\mu)$ be a labeled graph, $\Sigma$ be the labeling set and let $v\in G$ be a vertex. Define the set $L_v$ as the set of labels of outgoing edges at $v$. More precisely, we have $x_i\in L_v$ if and only if $G$ contains an edge $e \in E$ such that $\mu(e) = x_i$ and $o(e) =v$, and $x_i^{-1}\in L_v$ if and only if $G$ contains an edge $e \in E$ such that $\mu(e) = x_i$ and $t(e) =v$.
\end{mydef}

\begin{mydef}
Let $G$ be a labeled graph and $\Sigma$ be the labeling set. The \emph{Whitehead graph of} $G$ is the undirected graph defined as follows:
\begin{enumerate}
    \item The alphabet $\Sigma$ is the vertex set.
    \item For every vertex $v\in G$ and for every pair $x_i,x_j\in L_v$ of distinct letters at $v$, there is an undirected edge between $x_i$ and $x_j$ in the Whitehead graph.
\end{enumerate}
\end{mydef}

Notice that the Whitehead graph contains a complete subgraph with vertex set $L_v$ for every vertex $v\in G$; moreover, the Whitehead graph is exactly the union of these complete subgraphs. Taking $G = \Delta_H$, we obtain the Whitehead graph of a nontrivial finitely generated subgroup $H \leq F_m.$ When the subgroup $H$ is generated by a single word, i.e., $H = \langle w \rangle$, this graph coincides with the Whitehead graph of the cyclic reduction of $w$. In this sense, the notion of the Whitehead graph of $H$ generalizes the earlier Definition \ref{defWhiteheadgraph}. The notion of a cut vertex can likewise be defined for the Whitehead graph of a subgroup, exactly as in Definition \ref{defcutvertex}.

{\myexa \label{example:free} Recall the subgroup $H = \langle yx^{-1}, yzy^{-1}zt\rangle \leq F_4$ as discussed in Example \ref{example:H}. Notice that $H$ is a free factor of $F_4$. 
In the extended core graph $\widehat{\Delta}_H$, the set of labels $L_v$ associated with the outgoing edges at each vertex $v$ is as follows: $L_1 = \{x,y,t^{-1}\}$, $L_2 = \{x^{-1},y^{-1},z\}$, $L_3 = \{y^{-1},z^{-1}\}$, $L_4 = \{y,z\}$, and $L_5 = \{z^{-1},t\}$. 
\begin{figure}[!htb]
        \centering
        \includegraphics[width=\textwidth]{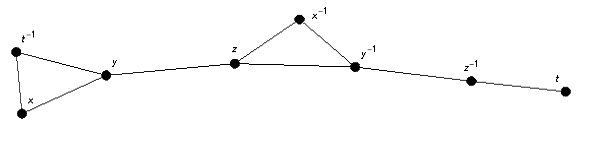}
        \caption{The Whitehead graph of $H$}
        \label{fig:W_H}
\end{figure}
Notice that $L_1\cap L_2 = \emptyset,$ where the vertices $1$ and $2$ represent the origin and the terminal vertex of the edge $e$ in $\widehat{\Delta}_H$ with label $y$, corresponding to the cut vertex $y$ in Figure \ref{fig:W_H}.}\\

We recall the analogues of Theorems \ref{cutvertex} and \ref{finesse} for free factors. See Theorem 5.5, 5.6 and 5.7 in \cite{ascari2021fine}.
\begin{mythm}\label{cutvertex2}
Let $H\leq F_m$ be a free factor, and suppose $\Delta_{H}$ has more than one vertex. Then the Whitehead graph of $H$ contains a cut vertex.
\end{mythm}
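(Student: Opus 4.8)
The plan is to prove the contrapositive: if the Whitehead graph $\mathrm{Wh}(H)$ has no cut vertex, then $\Delta_H$ consists of a single vertex. Set $k=\operatorname{rank}(H)$, and for a subgroup $H'$ in the orbit $\operatorname{Aut}(F_m)\cdot H$ let $c(H')$ be the number of vertices of its core graph $\Delta_{H'}$. First I would record two elementary observations. (a) Each $\Delta_{H'}$ is a finite connected graph with first Betti number $k$, so $\#E_{\Delta_{H'}}=c(H')+k-1$; hence $c(H')\ge 1$, with equality precisely when $\Delta_{H'}$ is a single vertex carrying $k$ loops. (b) Since $H$ is a free factor, a basis of $H$ extends to a basis of $F_m$, so there is $\psi\in\operatorname{Aut}(F_m)$ with $\psi(H)=\langle x_1,\dots,x_k\rangle$, whose core graph is exactly such a one-vertex graph. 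Combining (a) and (b), $\min\{c(H'):H'\in\operatorname{Aut}(F_m)\cdot H\}=1$, attained exactly at the orbit members whose core graph has one vertex. It therefore suffices to prove: \emph{if $\mathrm{Wh}(H)$ has no cut vertex, then $c(H)$ is minimal in its orbit}; granting this, $\Delta_H$ has one vertex, contradicting the hypothesis, so $\mathrm{Wh}(H)$ must contain a cut vertex.

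The core of the argument is then Whitehead's method transferred from cyclic words to core graphs, which I would carry out (following \cite{ascari2021fine}) in two steps. \textbf{Step 1, peak reduction:} if $c(H)$ is not minimal in its orbit, then some single Whitehead automorphism $\varphi$ satisfies $c(\varphi(H))<c(H)$; equivalently, every automorphism of $F_m$ can be written as a product of Whitehead automorphisms along which $c$ has no strict local maximum. \textbf{Step 2, the cut criterion:} applying a Whitehead automorphism $\varphi=(A,a)$ to $H$ amounts to modifying the labels of $\Delta_H$ according to $(A,a)$ and then Stallings-folding, and I would compute the change $c(\varphi(H))-c(H)$ directly from $\mathrm{Wh}(H)$ as a signed count of the edges of $\mathrm{Wh}(H)$ crossing the partition determined by $A\cup\{a\}$ against those incident to $a$. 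The conclusion of Step 2 is that some choice of $\varphi$ makes this quantity negative if and only if $\mathrm{Wh}(H)$ admits a cut realizing one of the two configurations of Definition \ref{defcutvertex}, i.e.\ has a cut vertex. Putting the steps together: if $\mathrm{Wh}(H)$ has no cut vertex, Step 2 shows that no Whitehead automorphism strictly decreases $c$, so by Step 1 the value $c(H)$ is minimal in its orbit, which by the previous paragraph forces $\Delta_H$ to be a single vertex.

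The step I expect to be the genuine obstacle is Step 2. For a cyclically reduced word (Theorem \ref{cutvertex}) the length change under a Whitehead automorphism is a short cancellation count; for a core graph one must instead keep careful track of how each edge of $\Delta_H$ is relabelled, possibly subdivided, and then re-identified during folding, and verify that the net change in the vertex count is controlled exactly by the relevant minimal cut of $\mathrm{Wh}(H)$. The two clauses of Definition \ref{defcutvertex} --- a connected component of $a$ not containing $a^{-1}$, versus a connected component that becomes disconnected when $a$ is removed --- correspond to the two ways in which a complexity-decreasing $\varphi=(A,a)$ can arise, and I would analyse them separately. Two sanity checks would accompany the proof: when $k=1$ the subgroup is $\langle w\rangle$ with $w$ primitive, cyclically reduced and of length $>1$, its core graph is a single cycle, $\mathrm{Wh}(H)$ is exactly the Whitehead graph of the cyclic word $w$, and the statement reduces to Theorem \ref{cutvertex}; and for the free factor $H=\langle yx^{-1},\,yzy^{-1}zt\rangle$ of Example \ref{example:free} one verifies directly that $y$ is a cut vertex of $\mathrm{Wh}(H)$.
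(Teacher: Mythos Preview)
The paper does not supply its own proof of Theorem~\ref{cutvertex2}; it is simply recalled from \cite{ascari2021fine} (Theorem~5.5 there), together with Theorems~\ref{Whitehead2} and~\ref{finesse2}, and no argument is reproduced. Your outline---free factor $\Rightarrow$ orbit minimum of $c(\cdot)$ is $1$; peak reduction $\Rightarrow$ some single Whitehead move decreases $c$; the existence of a $c$-decreasing move $\Rightarrow$ a cut vertex in the Whitehead graph---is exactly the classical Whitehead strategy transported from cyclic words to core graphs, and this is precisely the route taken in \cite{ascari2021fine}. So your plan is correct and aligned with the cited source; there is nothing further in the present paper to compare against.

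One caution about Step~2, which you rightly flag as the crux. The Whitehead graph of $H$ defined here is the union, over vertices $v$ of $\Delta_H$, of the complete graphs on the label sets $L_v$; the change in $\#E_{\Delta_H}$ (equivalently in $c(H)$, by your observation~(a)) under a move $(A,a)$ is not computed by a single global ``edges across the cut minus degree of $a$'' formula as in the word case, because distinct vertices of $\Delta_H$ contribute differently depending on how $L_v$ sits relative to $A$ and $\{a\}$. In \cite{ascari2021fine} the argument is organized around the trichotomy recorded here as Theorem~\ref{finesse2} (the three cases $L_v\cap A=\emptyset$, $L_v\subseteq A$, $a\in L_v$ and $L_v\subseteq A\cup\{a\}$) rather than a single cut-capacity identity; if you carry Step~2 through you will almost certainly need to structure it vertex-by-vertex in the same way. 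Your Step~1 (peak reduction for core-graph complexity) is a genuine theorem in its own right and should be cited rather than re-derived.
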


\begin{mythm}\label{Whitehead2}
Let $H\leq F_m$ be a free factor, and suppose the Whitehead graph of $H$ contains a cut vertex. Then there is a Whitehead automorphism $\varphi$ such that $\Delta_{\varphi(H)}$ has strictly fewer vertices and strictly fewer edges than $\Delta_{H}$.
\end{mythm}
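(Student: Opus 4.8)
\textbf{Proof proposal for Theorem \ref{Whitehead2}.}

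The plan is to follow Ascari's strategy and construct the automorphism $\varphi$ explicitly from the cut vertex, then verify by a local (vertex-by-vertex) analysis on the core graph that applying $\varphi$ and folding strictly decreases both the number of vertices and the number of edges. First, I would fix a cut vertex $a$ of the Whitehead graph of $H$ and let $A\subseteq\Sigma\setminus\{a,a^{-1}\}$ be (the vertex set of) a suitable union of connected components in the Whitehead graph of $H$ with $a$ removed --- chosen so that $a^{-1}\notin A$, as allowed by the two configurations in Definition \ref{defcutvertex}. Set $\varphi=(A,a)$. The geometric meaning of $\varphi$ on core graphs is the standard one: at each vertex $v$ of $\widehat{\Delta}_H$, the outgoing edges whose labels lie in $A$ get a new edge labeled $a$ prepended (equivalently, one subdivides and relabels), while the edge labeled $a$ out of $v$ may get edges labeled $a^{-1}$ attached; then one folds. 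I would describe this operation on $\widehat{\Delta}_H$ precisely and note that the folds are exactly the collapses referred to in the statement of Theorem \ref{introascari}.

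The core of the argument is then a counting lemma. For each vertex $v$ of $\widehat{\Delta}_H$, consider the set $L_v$ of labels at $v$; by the remark after the Whitehead-graph-of-$G$ definition, the Whitehead graph of $H$ is the union over $v$ of the complete graphs on $L_v$. Because $a$ is a cut vertex, the construction of $A$ guarantees that at vertices $v$ with $a\in L_v$ (or $a^{-1}\in L_v$), the letters of $L_v$ distribute across the "$A$-side" and the "non-$A$-side" in a way that forces a genuine fold after applying $\varphi$ --- that is, two edges that were distinct before become identified. I would make this quantitative: track, vertex by vertex, the change $\Delta E(v)$ in edge count and show $\sum_v \Delta E(v) < 0$, with the strict inequality coming precisely from the component of the cut vertex. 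The matching statement for vertices follows because a nontrivial fold of a core graph (which has no valence-one vertices off the root, and the root is handled separately) must identify two vertices, and once the edge count drops the connectivity of the core forces the vertex count to drop as well; alternatively one invokes that $\varphi$ is an automorphism so $\operatorname{rank}(H)$ is unchanged, hence $\#E-\#V$ is constant on cores, so $\#E$ strictly decreasing is equivalent to $\#V$ strictly decreasing.

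The main obstacle I anticipate is the bookkeeping for the fold: showing that applying $\varphi$ really does produce a non-reduced (foldable) configuration, and that after \emph{all} cancellations/folds the net effect is a strict decrease rather than merely a rearrangement. This is where the two cases of Definition \ref{defcutvertex} must be handled separately --- in case \ref{def:cut1} the absence of $a^{-1}$ in the component of $a$ is what kills the edges incident to that component, while in case \ref{def:cut2} one must use that removing $a$ disconnects the component, so $A$ can be taken to be one of the resulting pieces and the edges across the cut vanish. A secondary subtlety is ensuring the root vertex $v_1$ and its basepoint-marking are respected by the fold, so that $\Delta_{\varphi(H)}$ is genuinely the core of $\varphi(H)$ and not a graph with extraneous valence-one twigs; this is routine but must be stated. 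Since Theorem \ref{Whitehead2} is quoted from \cite{ascari2021fine} (Theorems 5.5--5.7), I would either reproduce Ascari's proof along these lines or simply cite it and record the explicit choice of $\varphi$ that we need for the later sections, namely one realizing $\Delta_{\varphi(H)}$ as a collapse of $\Delta_H$.
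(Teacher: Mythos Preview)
Your proposal is fine, but note that the paper itself does not give a proof of Theorem~\ref{Whitehead2}: it is stated as a quotation of \cite[Theorems 5.5--5.7]{ascari2021fine} with no argument supplied. Your final option --- simply citing Ascari and recording that $\varphi=(A,a)$ can be chosen so that $\Delta_{\varphi(H)}$ is obtained from $\Delta_H$ by collapsing edges --- is exactly what the paper does, so that suffices; the sketch you give beforehand is essentially Ascari's own argument and would also be acceptable if you want a self-contained account (your observation that $\#E-\#V=\operatorname{rank}(H)-1$ is invariant under automorphisms, so a strict drop in edges forces a strict drop in vertices, is a clean way to handle the final step).
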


We now recall Theorem 5.7 from \cite{ascari2021fine} that uses the notion $L_v$ introduced in Definition \ref{Letters}.

\begin{mythm}\label{finesse2}
The automorphism $\varphi=(A,a)$ in \textnormal{Theorem \ref{Whitehead2}} can be chosen in such a way that, at each vertex $v$ of $\Delta_H$, exactly one of the following configurations takes place:
\begin{enumerate}
    \item $L_v\cap A = \emptyset$.
    \item $L_v\subseteq A$.
    \item \label{thm:case3} $a\in L_v$ and $L_v\subseteq A\cup\{a\}$.
\end{enumerate}
\end{mythm}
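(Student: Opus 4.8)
The plan is to choose the Whitehead automorphism $\varphi=(A,a)$ from Theorem \ref{Whitehead2} so that $A$ is a union of connected components of the graph obtained from the Whitehead graph of $H$ by deleting the cut vertex $a$. The three configurations will then be forced by the clique structure of the Whitehead graph observed above, and the cut-vertex hypothesis (Theorem \ref{cutvertex2}) will be precisely what makes such an $A$ available while still serving the purpose of Theorem \ref{Whitehead2}.

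Write $W$ for the Whitehead graph of $H$, let $a$ be a cut vertex of $W$ (which exists by Theorem \ref{cutvertex2}), and put $W'=W\setminus\{a\}$. First I would record the consequence of the fact that $W=\bigcup_v K_{L_v}$, the union over the vertices $v$ of $\Delta_H$ of the complete graphs on the letter sets $L_v$: if $a\notin L_v$ then $K_{L_v}$ is a connected subgraph of $W'$, so $L_v$ lies in a single connected component of $W'$; if $a\in L_v$ then $K_{L_v\setminus\{a\}}$ is a connected subgraph of $W'$, so $L_v\setminus\{a\}$ lies in a single connected component of $W'$. Now let $A$ be \emph{any} union of connected components of $W'$ with $a^{-1}\notin A$ (the latter imposed by the definition of a Whitehead automorphism). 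I claim every vertex $v$ of $\Delta_H$ then satisfies exactly one of the three listed configurations. If $a\notin L_v$ and $C$ is the component of $W'$ containing $L_v$: either $C\subseteq A$, giving $L_v\subseteq A$ (case 2), or $C\cap A=\emptyset$, giving $L_v\cap A=\emptyset$ (case 1). If $a\in L_v$ and $C$ is the component of $W'$ containing $L_v\setminus\{a\}$: either $C\subseteq A$, giving $a\in L_v$ and $L_v\subseteq A\cup\{a\}$ (case 3), or $C\cap A=\emptyset$, which with $a\notin A$ gives $L_v\cap A=\emptyset$ (case 1). Mutual exclusivity follows from $a\notin A$ together with $|L_v|\ge2$ for every vertex $v$ of the core $\Delta_H$, which rules out the only possible overlaps, namely $L_v=\emptyset$ and $L_v=\{a\}$.

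It remains to exhibit a union $A$ of connected components of $W'$, with $a^{-1}\notin A$, for which $\varphi=(A,a)$ strictly decreases the number of vertices and of edges of $\Delta_H$. For this I would revisit the proof of Theorem \ref{Whitehead2} (Ascari's Theorem 5.6), which itself builds $A$ out of the components of $W'$ produced by the cut vertex $a$: when $a^{-1}$ lies outside the connected component of $a$ in $W$, one takes $A$ to be a union of components of $W'$ meeting a neighbour of $a$ (none of which contains $a^{-1}$); when deleting $a$ splits the component of $a$ in $W$ into pieces $D_1,\dots,D_r$ with $r\ge2$, one takes $A$ to be a union of some but not all of the $D_i$, avoiding the one that contains $a^{-1}$. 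In both cases $A$ is a nonempty union of components of $W'$ with $a^{-1}\notin A$, so by the previous paragraph the configurations hold; and the size-change count in the proof of Theorem \ref{Whitehead2} shows that, after applying $\varphi$ to the generators of $H$ and refolding $\Delta_H$, a nonempty set of edges is collapsed, so both counts strictly drop.

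The step I expect to be the main obstacle is this last one: checking that a component-respecting $A$ genuinely collapses an edge of $\Delta_H$ rather than leaving the core unchanged, and handling the bookkeeping for the letter $a^{-1}$ — whether an edge labelled $x_j$ is sent to $ax_j$ or to $ax_ja^{-1}$ depends on whether $x_j^{-1}\in A$, and it is the interaction of these trailing $a^{-1}$'s with the folding that ties the collapse in $\Delta_H$ to the separation of $W'$ by the cut vertex. The clean way to dispatch it is to extract the relevant size-change inequality from the proof of Theorem \ref{cutvertex}/\ref{Whitehead2} and observe that every reduction step there can be realized with $A$ a union of components of $W'$, so that restricting to such $A$ loses nothing; the genuinely new content of the present theorem is then only the clique observation and the case analysis of the second paragraph.
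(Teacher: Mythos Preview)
The paper does not give its own proof of this statement: it is simply recalled as Theorem~5.7 from Ascari's paper \cite{ascari2021fine}, with no argument supplied here. So there is no in-paper proof to compare against.

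That said, your proposal is correct and is essentially the standard argument (and, judging from the way the paper later uses the component structure in the proof of Proposition~\ref{pro:labels}, it matches Ascari's approach). The key observation---that each $L_v$ spans a clique in the Whitehead graph $W$, so after deleting the cut vertex $a$ the set $L_v$ (or $L_v\setminus\{a\}$ when $a\in L_v$) lies in a single connected component of $W'=W\setminus\{a\}$---is exactly what forces the trichotomy once $A$ is chosen as a union of components of $W'$ avoiding $a^{-1}$. Your verification of mutual exclusivity via $a\notin A$ and $|L_v|\ge 2$ is clean, and your remark that the construction of $A$ in the proof of Theorem~\ref{Whitehead2} already respects the component decomposition is the right way to close the argument: in both cut-vertex configurations of Definition~\ref{defcutvertex}, Ascari's $A$ is built from components of $W'$, so no extra work is needed to reconcile the size-reduction property with the trichotomy. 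The ``obstacle'' you anticipate is therefore not a genuine gap but just a cross-reference to the existing proof.
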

The following statements are immediate consequences of Theorems \ref{Whitehead2} and \ref{finesse2}.
{\mycor \label{cor:ver_core_phi} Let $H\leq F_m$ be a free factor, and suppose the Whitehead graph of $H$ contains a cut vertex. Also, let $S_o \subset \widehat{V}$ be the set containing the vertices of $\Delta_H$ that fall in the case \ref{thm:case3} of \textnormal{Theorem \ref{finesse2}}. Then the set $S_o$ is non-empty. Additionally, the set $\widehat{V}\setminus S_o$ is the vertex set of $\Delta_{\varphi(H)},$ where $\widehat{V}$ is the vertex set of $\Delta_H$.}

\begin{myrem}\label{rem:collaps_sets}
Let $a$ be the cut vertex, and let $S_o \subseteq \widehat{V}$ be as defined in the Corollary \ref{cor:ver_core_phi}. Define the following sets:
\begin{itemize}
    \item  $E_o = \left\{ e \in E_{\Delta_H} \mid \mu(e) = a \text{ and } o(e) = v \in S_o \right\}$, the set of directed edges labeled $a$ whose origin lies in $S_o$.    
    \item $S_t = \bigl\{ t(e) \in \widehat{V} \mid e \in E_o  \bigr\}$, the set of terminal vertices of edges in $E_o$.
    \item $E_t = \{ \bar{e} \in \widehat{E} \mid e \in E_o \}$, the set of formal inverses of edges in \( E_o \).
\end{itemize}
Then by Theorem \ref{Whitehead2}, the sets $E_o, S_t$ and $E_t$ are non-empty. Moreover, each of the sets $S_o, E_o, S_t, E_t$ have the same cardinality:
$|S_o| = |E_o| = |S_t| = |E_t|.$
\end{myrem}

{\mypro \label{pro:labels} Let $a$ be the cut vertex. Let $e \in E_o$ such that $o(e) = v$ and $t(e) = v'$. Then the following statements are true.
\begin{enumerate}
    \item \label{pro_case1} $L_v \cap L_{v'} = \emptyset,$ where $L_v$ and $ L_{v'}$ denote the sets of labels of outgoing edges at vertices $v$ and $v'$ in $\widehat{\Delta}_H.$
    \item \label{pro_case2} Let $x \in L_v$, $y \in L_{v'}$. Then $L_{(v,x^{-1})} = L_v\setminus\{x\} \textrm{ and } L_{(v',y^{-1})} = L_{v'}\setminus\{y\} .$
\end{enumerate} where $L_{(v,x^{-1})}$, and $L_{(v',y^{-1})}$ denote the sets of labels of outgoing edges at $(v,x^{-1})$ and $(v',y^{-1})$ in $\B_H$, respectively.}
\begin{proof}
    Let $e \in E_o$ such that the label $\mu(e) = a$ is a cut-vertex of the Whitehead graph of $H$, with $o(e) = v$ and $t(e) = v'$. 
    \begin{enumerate}
    \item By Definition \ref{defcutvertex}, if configuration \ref{def:cut1} occurs, the fact that $a \in L_v$ and $a^{-1} \in L_{v'}$ ensures that both $L_v$ and $L_{v'}$ belong to disjoint connected components. If configuration \ref{def:cut1} does not occur, then configuration \ref{def:cut2} must take place. In this case, removing the cut-vertex $a$ disconnects the component $G$ of the Whitehead graph of $H$ containing $a$. We are then left with at least two nonempty connected components, $G_1$ and $G_2$, and at least one of these components (let's say $G_1$) does not contain $a^{-1}$. As the letters in $L_v$ represent vertices of a complete subgraph of the Whitehead graph of $H$, the set $L_v$ must be contained in $G_1$. This yields the case. 
    \item This statement follows from the Equation \eqref{eqn:B_H_transition} of the definition of $\B_H$. This completes the proof.
\end{enumerate}
\end{proof}

{\myexa \label{example:freeH} From the Whitehead graph of $H = \langle yx^{-1}, yzy^{-1}zt\rangle$ in Figure \ref{fig:W_H} (see Example \ref{example:free}), the vertex $y$ is a cut vertex whose removal disconnects the graph. Other cut vertices include $z, y^{-1}$ and $z^{-1}$, but we choose $y$ to define the Whitehead automorphism $\varphi$. Let $A = \{x, t^{-1}\}$ be the set of vertices in the component not containing $y^{-1}$. This yields $\varphi = (\{x, t^{-1}\}, y)$, where $\varphi(x) = yx$, $\varphi(y) = y$, $\varphi(z) = z$, and $\varphi(t) = ty^{-1}$. The automorphism satisfies the trichotomy of Theorem \ref{finesse2} for $\Delta_H$, and we get $\varphi(H) = \langle x^{-1}, zy^{-1}zt\rangle$. Observe that $S_o = \{1\},E_o = \{e\}, S_t = \{2\} \textrm{ and } E_t = \{\bar{e}\},$ where $e$ is the dashed edge in Figure \ref{fig:coreH}.}

\section{The Cogrowth Inequality}
In this section, our main objective is to extend Theorem \ref{Whitehead2} to the automaton $\B_H$ and derive $\B_{\varphi(H)}$. Here, $\B_H$ and $\B_{\varphi(H)}$ denote ergodic automata that recognize the irreducible languages $L_H$ and $L_{\varphi(H)}$ associated with $H$ and $\varphi(H)$, respectively. Additionally, we will describe a method for obtaining $M_1$ from $M$, where $M_1$ and $M$ are the adjacency matrices of the transition graphs of $\B_{\varphi(H)}$ and $\B_H$, respectively. Through this method, we establish a strict cogrowth inequality, showing $\lambda_1 > \lambda$, where $\lambda_1$ and $\lambda$ denote the Perron-Frobenius eigenvalues of $M_1$ and $M$, respectively. 
\subsection{An Extension of Ascari's Result}
\begin{figure}[!htb]
\begin{subfigure}[b]{0.5\textwidth}
        \centering
          \begin{tikzpicture}[scale=.65]
    \tikzstyle{knode}=[circle,draw=black,thick,text
width = 30 pt,align=center, gray, inner sep=1pt]
\tikzstyle{rnode}=[circle,draw=black,thick,text
width = 25 pt,align=center,inner sep=1pt]
\node (q1) at (-3,2) [rnode] {$o(e_1)$};
\node (q12) at (-3,0)[] {$\vdots$};
\node (q2) at (-3,-2) [rnode] {$o(e_d)$};
\node (q0) at (0,0) [knode]{$(s,a^{\e})$} ;
\node (q3) at (3,-2) [rnode] {$t(\ee_{d'})$};
\node (q34) at (3,0)[] {$\vdots$};
\node (q4) at (3,2) [rnode] {$t(\ee_1)$};
\draw[->,gray, dashed] (q1) to node[above=2pt] {$a^{\e}$} (q0);
\draw[->,gray, dashed] (q2) to node[above=2pt] {$a^{\e}$} (q0);
\draw[->,black, thick] (q0) to node[above=2pt] {$x_{d'}^{\e_{d'}}$} (q3);
\draw[->,black, thick] (q0) to node[above=2pt] {$x_{1}^{\e_{1}}$} (q4);
    \end{tikzpicture}
    \caption{Adjacency in $\B_H$}
    \label{fig:adja_H}
\end{subfigure}
       \hfill
\begin{subfigure}[b]{0.45\textwidth}
        \centering
    \begin{tikzpicture}[scale=.65]
    \tikzstyle{knode}=[circle,draw=black,thick,text
width = 30 pt,align=center,inner sep=1pt]
\tikzstyle{rnode}=[circle,draw=black,thick,text
width = 25 pt,align=center,inner sep=1pt]
\node (q1) at (-3,2) [rnode] {$o(e_1)$};
\node (q12) at (-3,0)[] {$\vdots$};
\node (q2) at (-3,-2) [rnode] {$o(e_d)$};
\node (q3) at (3,-2) [rnode] {$t(\ee_{d'})$};
\node (q34) at (3,0)[] {$\vdots$};
\node (q4) at (3,2) [rnode] {$t(\ee_1)$};
\draw[->] (q1) to (-1.5,1) node[above] {$x_{d'}^{\e_{d'}}$} to (q3);
\draw[->] (q1) to node[above=2pt] {$x_{1}^{\e_{1}}$} (q4);
\draw[->] (q2) to node[above=2pt] {$x_{d'}^{\e_{d'}}$} (q3);
\draw[->] (q2) to (-1.5,-1) node[above] {$x_{1}^{\e_{1}}$} to (q4);
    \end{tikzpicture}
    \caption{Adjacency in $\B_{\varphi(H)}$}
    \label{fig:adja_phiH}
    \end{subfigure}
    \caption{ }
    \label{fig:schreier_generator}
\end{figure}

    Our goal is to derive $\B = \bigl(Q, \Sigma, \delta, I, F\bigr)$ from $\B_H = \bigl(Q_H, \Sigma, \delta_H, I_H, F_H\bigr)$, using a Whitehead automorphism $\varphi=(A,a)$ given by Theorem \ref{Whitehead2}. We construct a non-empty subset $S$ of $Q_H$ using non-empty sets $S_o$ and $S_t$, defined as:
    $$S = \bigl\{(v, a^{-1}) , (v',a) \in Q_H | v \in S_o \textnormal{ and } v' \in S_t\bigr\}.$$
    For any $(s,a^{\e}) \in S$, we denote the $in$ and $out$ degree of the vertex $(s,a^{\e})$ in $\B_H$ as $d$ and $d'$, respectively. These degrees, $d$ and $d'$, are dependent on $(s,a^{\e})$. i.e. There are $d$ edges in $\B_H$, denoted as $\displaystyle e_i, i = 1,\cdots,d$, terminating at $(s,a^{\e})$ with a common label $a^{\e}$ (See gray dashed edges in Figure \ref{fig:adja_H}). Similarly, there are $d'$ edges in $\B_H$, denoted as $\displaystyle \ee_j, j = 1,\cdots,d'$, originating from $(s,a^{\e})$ with labels $x_j^{\e_j}$, where $x_j^{\e_j} \neq a^{-\e}$ (See black edges in Figure \ref{fig:adja_H}). 
    
    Recall from the Lemma 5.16 of \cite{ascari2021fine} that the core $\Delta_{\varphi(H)}$ can be obtained by collapsing all edges $e \in E_o$, where $e$ connects $o(e) = v \in S_o$ to $t(e) = v'\in S_t$ and has label $a$. Additionally, collapsing an edge $e$ in $\Delta_H$ results in the collapse of two edges, namely, $e$ and $\bar{e}$ in $\widehat{\Delta}_H$, where $\bar{e}$ connects $o(\bar{e}) = v'$ to $t(\bar{e}) = v$ and has label $a^{-1}$. Consequently, the extended core $\widehat{\Delta}_{\varphi(H)}$ can be obtained from the extended core $\widehat{\Delta}_H$, by collapsing all edges $e \in E_o$ and $\bar{e} \in E_t$. Equivalently, we obtain the ergodic automaton $\B$ from $\B_H$ by collapsing all the $d$ edges in $\B_H$, namely $\displaystyle e_i, i = 1,\cdots,d$ with label $a^{\e}$, associated with each $(s,a^{\e}) \in S$. See Figure \ref{fig:adja_phiH}. This process of collapsing is described in the theorem below.
    
\begin{mythm}\label{thm:B_from_BH}  
Let $H\leq F_m$ be a non-cyclic free factor, and suppose the Whitehead graph of $H$ contains a cut vertex. Then there is a Whitehead automorphism $\varphi$ such that the ergodic automaton $\B_{\varphi(H)}$ that recognizes $L_{\varphi(H)}$ can be obtained from $\B_H$ by collapsing certain edges of $\B_H$.
\end{mythm}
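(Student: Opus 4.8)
The plan is to realize $\B_{\varphi(H)}$ as the automaton produced by the construction of Section~\ref{sec:def_B_H} applied to the extended core $\widehat{\Delta}_{\varphi(H)}$ of $\varphi(H)$, and then to show that the passage $\widehat{\Delta}_H\rightsquigarrow\widehat{\Delta}_{\varphi(H)}$ is transported, under that construction, to exactly the edge collapse on $\B_H$ sketched just above (Figures~\ref{fig:adja_H} and~\ref{fig:adja_phiH}). First I would note that, since $\varphi\in\mathrm{Aut}(F_m)$, the subgroup $\varphi(H)$ is again non-trivial, non-cyclic and finitely generated, so it falls under Convention~\ref{con1}, the automaton $\B_{\varphi(H)}$ is defined, and by Theorem~\ref{thm:B_m_properties} together with the construction of Section~\ref{sec:def_B_H} it is ergodic and recognizes $L_{\varphi(H)}$; hence only the asserted combinatorial description is at issue. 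A preliminary observation simplifies the geometry: $S_o$ and $S_t$ are disjoint (a vertex $v'\in S_t$ carries $a^{-1}\in L_{v'}$, which is incompatible with the inclusion $L_{v'}\subseteq A\cup\{a\}$ of case~\ref{thm:case3} in Theorem~\ref{finesse2}), and, the core being folded, $e\mapsto(o(e),t(e))$ identifies $E_o$ with a perfect matching between $S_o$ and $S_t$. Thus, by Ascari's Lemma~5.16 (recalled in the paragraph preceding the statement), $\widehat{\Delta}_{\varphi(H)}$ is obtained from $\widehat{\Delta}_H$ by collapsing the disjoint family of bigons $\{e\bar e\mid e\in E_o\}$: each such bigon merges its pair $v\in S_o$, $v'\in S_t$ into a single vertex $\tilde v$ and deletes $e$ and $\bar e$, and no other collapsed edge is incident to $v$ or $v'$.

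Next I would set up the vertex bijection $Q_{\varphi(H)}\leftrightarrow Q_H\setminus S$. Over a vertex outside $S_o\cup S_t$ nothing changes. Over a merged vertex $\tilde v$ (from $v\in S_o$, $v'\in S_t$), the set of labels of edges of $\widehat{\Delta}_{\varphi(H)}$ terminating at $\tilde v$ equals (the labels terminating at $v$, omitting $a^{-1}$) together with (the labels terminating at $v'$, omitting $a$); by Proposition~\ref{pro:labels}(\ref{pro_case1}) the sets of incoming labels at $v$ and at $v'$ are disjoint, so this is a disjoint union, and it omits exactly the labels of the deleted edges $\bar e$ and $e$. By~\eqref{eqn:B_H_vertex_set} this means the states of $\B_{\varphi(H)}$ over $\tilde v$ correspond bijectively to $\{(v,z)\in Q_H\mid z\neq a^{-1}\}\sqcup\{(v',w)\in Q_H\mid w\neq a\}$, i.e. to all states of $\B_H$ over $v$ or $v'$ except the two members $(v,a^{-1})$, $(v',a)$ of $S$. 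Ranging over the matching yields $Q_{\varphi(H)}\cong Q_H\setminus S$; the initial and final sets transform accordingly under this identification, the only mild subtlety being that if $v_1\in S_o$ the states over its partner $v_1'$ also become initial and final, since $v_1'$ is absorbed into the new root.

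Finally I would match the transitions. A transition of $\B_H$ not incident to $S$ has underlying $\widehat{\Delta}_H$-edge outside $E_o\cup E_t$, hence survives in $\widehat{\Delta}_{\varphi(H)}$ and, via~\eqref{eqn:B_H_transition}, gives the corresponding transition of $\B_{\varphi(H)}$. A transition into a member $(q,a^{\e})\in S$ is one of the edges $e_i$; its underlying edge lies in $E_o\cup E_t$ and is deleted. A transition out of $(q,a^{\e})$ is some $\ee_j$ with label $x_j^{\e_j}$; its underlying $\widehat{\Delta}_H$-edge survives, with the same label, as an edge out of the merged vertex in $\widehat{\Delta}_{\varphi(H)}$. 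Applying~\eqref{eqn:B_H_transition} inside $\B_{\varphi(H)}$ then forces the letter $x_j^{\e_j}$, read from the state corresponding to $o(e_i)$, to lead to the state corresponding to $t(\ee_j)$ — precisely the new edge $o(e_i)\xrightarrow{\;x_j^{\e_j}\;}t(\ee_j)$ produced by the collapse in Figure~\ref{fig:adja_phiH}. Here Proposition~\ref{pro:labels}(\ref{pro_case2}), together with the case~\ref{thm:case3} inclusion $L_v\subseteq A\cup\{a\}$, guarantees that the out-label sets at $o(e_i)$ and at $(q,a^{\e})$ fit together so that $\B_{\varphi(H)}$ stays deterministic and no transition is created or destroyed beyond those shown. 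Matching states and transitions identifies $\B_{\varphi(H)}$ with the automaton obtained from $\B_H$ by collapsing the edges $e_i$ attached to the members of $S$, which proves the theorem.

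I expect the main obstacle to be this last identification of transitions: verifying on the nose that the rerouted edges produced by the collapse coincide with those dictated by~\eqref{eqn:B_H_transition} on $\widehat{\Delta}_{\varphi(H)}$, and that the first-coordinate merging $v,v'\mapsto\tilde v$ never collides — both of which rest on the disjointness statements in Proposition~\ref{pro:labels} and on the trichotomy of Theorem~\ref{finesse2}. A secondary point requiring care is the treatment of the root vertex $v_1$ in the case $v_1\in S_o$.
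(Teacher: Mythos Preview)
Your proposal is correct, and in fact carries out the isomorphism $\B\cong\B_{\varphi(H)}$ more directly than the paper does. You build an explicit bijection $Q_H\setminus S\to Q_{\varphi(H)}$ and then verify, case by case using Proposition~\ref{pro:labels} and the trichotomy of Theorem~\ref{finesse2}, that the transitions of the collapsed automaton $\B$ match exactly those dictated by~\eqref{eqn:B_H_transition} applied to $\widehat{\Delta}_{\varphi(H)}$. The paper instead first builds $\B$ by the collapse, checks determinism via Proposition~\ref{pro:labels}, and then argues at the level of languages: it shows $L(\B)\subseteq L_{\varphi(H)}$ and $L_{\varphi(H)}\subseteq L(\B)$ by following admissible paths through the collapse and its inverse, observes $|Q|=|Q_{\varphi(H)}|$, and finally invokes minimality of $\B_{\varphi(H)}$ together with the Myhill--Nerode theorem to conclude that the two automata are isomorphic. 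Your route avoids Myhill--Nerode entirely at the cost of the transition-by-transition bookkeeping you flag as the main obstacle; the paper's route trades that bookkeeping for a softer language-equality argument but needs the extra minimality input. One small completion to make in your write-up: the root can lie in $S_t$ just as well as in $S_o$ (the paper phrases this symmetrically as ``$q$ is the root vertex'' for $(q,a^{\e})\in S$), so the clause about initial and final states should cover both possibilities.
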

\begin{proof}
    Given $(s,a^{\e}) \in S$ and the edge $e_i, i = 1, \cdots, d$, there are $d'$ two-length paths from each $o(e_i)$ to $\displaystyle t(\ee_j), j = 1,\cdots,d'$ passing through the vertex $(s,a^{\e})$ in $\B_H$. Observe that $o(e_i), ~\displaystyle t(\ee_j) \notin S$, for $ j = 1,\cdots,d'$. These paths have labels $a^{\e}x_j^{\e_j}$, where $x_j^{\e_j} \neq a^{-\e}$. After collapsing the edge $e_i$, these $d'$ two-length paths become $d'$ one-length paths (or $d'$ edges) from $o(e_i)$ to $t(\ee_j), j = 1,\cdots,d'$ with labels $x_j^{\e_j}$ in $\B$. 
    Call $Q = Q_H\setminus S$, i.e., for each $i = 1, \cdots, d$, we have 
    \begin{equation}
    \delta(o(e_i), x_j^{\e_j}) = t(\ee_j), j = 1,\cdots,d',\label{eqn:B2_transition}    
    \end{equation}
    while $\delta(o(e_i), a^{\e})$ does not exist in $\B$. It's worth noting that the collapsing process does not affect edges $e$ in $\B_H$ whose origin $o(e)$ and terminal $t(e)$ vertices are not both in $S$. Consequently, the maps $\delta$ and $\delta_H$ are identical for these edges. Thus, we have determined the set of states $Q$, the transition map $\delta$, and the alphabet $\Sigma$ of $\B$. We are now left with determining the set of initial and final states, namely, $I$ and $F$, of $\B$.
    
    If $s$ from the given state $(s,a^{\e}) \in S$, is not the root vertex of $\Delta_H$, then the initial and final vertices of $\B_H$ and $\B$ coincide. However, if $s$ is the root vertex, then $(s,a^{\e}) \in I_H$. Additionally, according to Remark 5.9 of \cite{ascari2021fine}, $(s,a^{-\e}) \notin S$ which implies $(s,a^{-\e}) \in I.$ For all $x^{\e} \in \Sigma \setminus \{a, a^{-1}\}$, if $(s,x^{\e}) \in I_H$ then $(s,x^{\e}) \in I.$ To determine the set of initial states of $\B$, we remove $(s,a^{\e})$ from $I_H$ and add the $d$ states $o(e_i)$, for $i = 1,\cdots,d$. Consequently, the sets of initial and final states of $\B$ are given by:
    \begin{equation}
        I = F = \bigl(I_H \setminus \{(s,a^{\e})\} \bigr)\cup \bigl\{o(e_i) | i = 1,\cdots,d\bigr\}. \label{eqn:B_initial_set2}
    \end{equation}
    This indicates that the set of initial states $I$ and final states $F$ of $\B$ depend on the vertex $s$ of the given state $(s,a^{\e}).$ In either case, the set of states is $Q$ and the transition map $\delta$ is as follows:
    \begin{gather}
    \delta\left((v,x_i^{\e}),x_j^{\e'}\right) = \delta_H\left((v,x_i^{\e}),x_j^{\e'}\right) = (vx_j^{\e'},x_j^{\e'}), \textrm{ if } (v,x_i^{\e}), (vx_j^{\e'},x_j^{\e'}) \notin S   \label{eqn:B1_transition}
    \end{gather}
    In addition to the Equation \eqref{eqn:B1_transition}, new edges that occur due to collapsing are given in Equation \eqref{eqn:B2_transition}. 
    If $s$ from the given state $(s,a^{\e}) \in S$, is not the root vertex of $\Delta_H$, then $I = F = I_H = F_H.$ Whereas if $s$ is the root vertex, then the sets of initial and final states of $\B$ are given in Equation \eqref{eqn:B_initial_set2} and thus, we obtain the finite automaton $\B.$
    
    Recall from Theorem \ref{thm:B_m_properties} that $\B_H$ is deterministic and has homogeneous ambiguity $|I_H| - 1$. Since $(s, a^{\epsilon}) \in S$, we must have either $(s, a^{\epsilon}) = (v, a^{-1})$ or $(s, a^{\epsilon}) = (v', a)$. Without loss of generality, assume $(s, a^{\epsilon}) = (v, a^{-1})$. By construction, each origin $o(e_i)$ is of the form $(v', x_j^{-1})$ for some $x_j \in L_{v'} \setminus \{a^{-1}\}$. From Proposition \ref{pro:labels}, the sets of labels of outgoing edges from each $o(e_i)$ are disjoint from those of $(v, a^{-1})$. Therefore, when we collapse the states $o(e_i)$ into $(v, a^{-1})$, the automaton $\B$ remains deterministic and since $\B$ is obtained from $\B_H$ by collapsing edges, it inherits homogeneous ambiguity $|I| - 1$.

    To show that $\B$ is the minimal ergodic automaton that recognizes $L_{\varphi(H)}$, let $w \in L(\B)$. Then there are exactly $|I|-1$ distinct admissible paths with label $w$ in $\B$. Notice from the construction of $\B_H$ and hence from the construction of $\B$ that these $|I|-1$ paths are distinct only at the initial state. Suppose none of the vertices of these paths belong to the set $S$. Then $w \in L_H$ such that $\varphi(w) = w$ and therefore $w \in L_{\varphi(H)}.$ Suppose some vertices of these paths do belong to the set $S$. Then there is a word $w' \in L_H$ such that $\varphi(w') = w \in \varphi(H)$. $w \in L(\B)$ implies that $w$ is reduced, hence $w \in L_{\varphi(H)}$. Thus $L(\B) \subseteq L_{\varphi(H)}$.
    
    To show that $L_{\varphi(H)} \subseteq L(\B)$, we first write $\B_{\varphi(H)}$ using $\widehat{\Delta}_{\varphi(H)}$ as:
    $$\B_{\varphi(H)} = \bigl(Q_{\varphi(H)}, \Sigma, \delta_{\varphi(H)}, I_{\varphi(H)}, F_{\varphi(H)}\bigr),$$
    The description of $Q_{\varphi(H)}, \Sigma, \delta_{\varphi(H)}, I_{\varphi(H)},$ and $ F_{\varphi(H)}$ in terms of the core $\widehat{\Delta}_{\varphi(H)}$ follows analogously from that of $\B_H$ given in Section \ref{sec:def_B_H}, and is therefore omitted.
    
    Let $w \in L_{\varphi(H)}$. Then $w \in \varphi(H)$ and $\varphi^{-1}(w) \in H$. If $\varphi^{-1}(w) = w$ then we are through. If not, then some vertices along each of the $|I_H| - 1$ admissible paths in $\B_H$ with the label $\varphi^{-1}(w)$ belong to the set $S.$ The collapsing process implies that $\varphi(\varphi^{-1}(w)) \in L(\B)$ implies that $w \in L(\B).$ It follows that $L(\B) = L_{\varphi(H)}$. Observe that $|Q| = |Q_{\varphi(H)}|.$ Recall from Theorem \ref{thm:B_m_properties} that $\B_{\varphi(H)}$ is minimal ergodic automaton such that $L(\B_{\varphi(H)}) = L_{\varphi(H)}.$ Therefore, the transition diagrams of the automata $\B_{\varphi(H)}$ and $\B$ are isomorphic, which completes the proof.
    \end{proof}

    {\myexa \label{exa:autB_H} Recall $H = \langle yx^{-1}, yzy^{-1}zt\rangle \leq F_4$ from Example \ref{example:freeH}. In the transition diagram of $\B_H$, the elements of the set $S = \{(2,y),(1,y^{-1})\}$ are denoted by states $11$ and $12$, respectively. See Figure \ref{fig:BandPhiB} for the transition diagrams of $\B$ and $\B_{\varphi(H)}$.
    \begin{figure}[!htb]
        \centering
        \begin{subfigure}[b]{\textwidth}
             \centering
             \includegraphics[width=\textwidth]{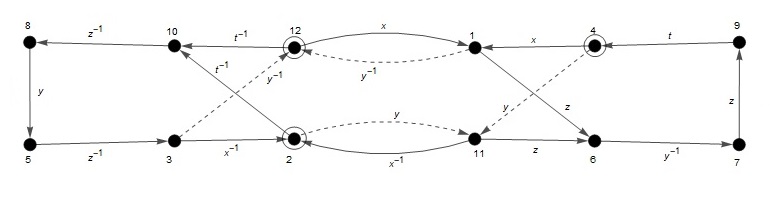}
             \caption{The automaton $\B_H$}
             \label{fig:B_H}
        \end{subfigure}
        \newline
        \begin{subfigure}[b]{\textwidth}
            \centering
            \includegraphics[width=\textwidth]{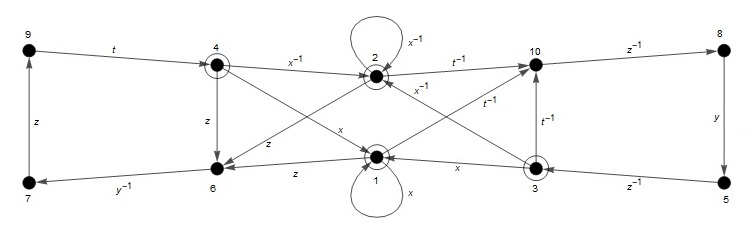}
            \caption{The automaton $\B_{\varphi(H)}$}
            \label{fig:B_PhiH}
        \end{subfigure}
        \caption{}
        \label{fig:BandPhiB}
    \end{figure}
     In these figures, dashed edges represent those to be collapsed, while circled vertices denote the initial and final states. The details on the state enumeration are provided in the subsequent section.}
    \subsection{The state enumeration}\label{ssec:SE}
     To facilitate a precise comparison between the automata $\B_H$ and $\B_{\varphi(H)}$, we adopt a modified enumeration of states. While the particular enumeration is not mathematically intrinsic, it plays an essential technical role in aligning the corresponding states of the two automata. This alignment, in turn, enables us to describe the transformation from the adjacency matrix $M$ of $\B_H$ to that of $\B_{\varphi(H)}$ via explicit and elementary operations.
     
     In the vertex enumeration of $\Delta_H$, vertices are represented as $v_k$, where $k = 1,\cdots,|\widehat{V}|$, with $v_1$ being the root vertex. After collapsing each edge $e$ from the set $E_o$, we identify the vertex $o(e)$ with $t(e)$ and label it as $t(e)$. Thus, the vertex enumeration of $\Delta_{\varphi(H)}$ is derived from that of $\Delta_H$ by removing the origins of edges in the set $E_o.$
    
     In the state enumeration of $\B_H$ as described in \cite{DGS2021}, states are represented as $(v_k,a_i^{\e})$, where $k$ ranges from $1$ to $|\widehat{V}|$, $i$ ranges from $1$ to $rank(H)$, and $\e$ takes values in $\{1,-1\}$. We call the state enumeration provided in \cite{DGS2021} as the old state enumeration (OSE). For our purposes, we derive a new state enumeration (NSE) by implementing two modifications on OSE as follows:
     Recall that for every $e \in E_o$, there is a pair of $S$-states, namely, $\{(o(e),a^{-1}), (t(e),a)\} \subset S \subset Q_H.$
     \begin{enumerate}
        \item Partitioning: For every $e \in E_o$, remove the pair of $S$-states from OSE and append them at the end. This results in a partition of $Q_H$, with the non-$S$-states preceding the $S$-states. 
        \item Grouping and Reordering: For every $e \in E_o$ among the remaining non-$S$-states, group together those with the first component as either $o(e)$ or $t(e)$ in the enumeration. Within this group, while ignoring the first component, reorder them with respect to their second component.  
     \end{enumerate}
     The immediate implication of these modifications is stated as follows:
     {\myrem \label{rem:OSE_varphiH} In the second modification, for each $e \in E_o$, we formed a group of states by disregarding their first components. We then relabeled these first components as $t(e)$. Consequently, the resulting non-$S$-state partition of NSE of $\B_H$ corresponds to the OSE of $\B_{\varphi(H)}$.}
     
     {\myexa \label{exa:enumeration} The OSE of $\B_H$ in Example \ref{exa:autB_H} is given by $(1,x^{-1})$, $(1,y^{-1})$, $(1,t)$, $(2,x)$, $(2,y)$, $(2,z^{-1})$, $(3,y)$, $(3,z)$, $(4,y^{-1})$, $(4,z^{-1})$, $(5,z)$ and $(5,t^{-1})$. Recall that $S = \{(2,y),(1,y^{-1})\}$ i.e. $E_o = \{e\}$.
     After implementing 1st modification, we obtain the non $S$-states as $(1,x^{-1})$, $(1,t)$, $(2,x)$, $(2,z^{-1})$, $(3,y)$, $(3,z)$, $(4,y^{-1})$, $(4,z^{-1})$, $(5,z)$, $(5,t^{-1})$ and the $S$-states as $(2,y)$ and $(1,y^{-1})$. As $o(e)$ and $t(e)$ in this example are $1$ and $2$, respectively. Considering the edge $e$, we have 4 states, namely, $(1,x^{-1})$, $(1,t)$, $(2,x)$ and $(2,z^{-1})$ among the non $S$-states. Applying the 2nd modification, we reorder these 4 states $(2,x)$, $(1,x^{-1})$, $(2,z^{-1})$ and $(1,t)$. Thus, our NSE of $\B_H$ is $(2,x)$, $(1,x^{-1})$, $(2,z^{-1})$, $(1,t)$, $(3,y)$, $(3,z)$, $(4,y^{-1})$, $(4,z^{-1})$, $(5,z)$, $(5,t^{-1})$, $(2,y)$ and $(1,y^{-1})$. By Remark \ref{rem:OSE_varphiH}, the OSE of $\B_{\varphi(H)}$ (See Figure \ref{fig:corePhiH}) is $(2,x)$, $(2,x^{-1})$,  $(2,z^{-1})$, $(2,t)$, $(3,y)$, $(3,z)$, $(4,y^{-1})$, $(4,z^{-1})$, $(5,z)$ and $(5,t^{-1})$.} 
     
    The new enumeration naturally induces a block structure on the matrix $M$, as described in the following corollary.
    
     \begin{mycor}\label{cor:M_partition} The matrix $M$ can be decomposed as follows:
            \begin{equation}
                M = \left(
            \begin{array}{cc}
             M' & U \\
             Z & O \\
            \end{array}
            \right), \label{eqn:decomposition_M}
            \end{equation}
            where $M'$, $U$, $Z$, and $O$ are sub-matrices with dimensions $|Q|\times|Q|$, $|Q|\times|S|$, $|S|\times|Q|$, and $|S|\times|S|$, respectively. This decomposition satisfies the following properties:
            \begin{enumerate}
                \item \label{cor:C}The rows of the matrix $U$ contain either all zeros or exactly one non-zero entry equal to 1.
                \item \label{cor:O}The matrix $O$ is a zero matrix.
            \end{enumerate}
    \end{mycor}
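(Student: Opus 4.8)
The plan is to read the block shape of $M$ straight off the new state enumeration and then to decide, label by label, which edges of $\B_H$ can terminate at an $S$-state. Under the NSE the non-$S$-states occupy the first $|Q|$ indices and the $S$-states the last $|S|$, so $M$ automatically splits as $\left(\begin{smallmatrix} M' & U\\ Z & O\end{smallmatrix}\right)$ with the stated block sizes; here $U$ records the edges from non-$S$-states to $S$-states and $O$ the edges between $S$-states. Since $\B_H$ is deterministic (Theorem \ref{thm:B_m_properties}), the transition rule $\delta_H\bigl((v,x_i^{\e}),c\bigr)=(vc,c)$ shows that distinct outgoing labels at a fixed state give distinct target states; hence there is at most one edge between any ordered pair of states and $M$ is a $0$–$1$ matrix. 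Thus item \ref{cor:C} reduces to showing that each row of $U$ carries at most one $1$, and item \ref{cor:O} to showing that $O$ vanishes.

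The single observation driving both items is that an edge of $\B_H$ ending at a state whose second coordinate is the letter $b$ must itself be labelled $b$ (immediate from the transition rule); in particular, every edge ending at an $S$-state is labelled $a$ or $a^{-1}$. So for item \ref{cor:O} it suffices to check that no outgoing edge of an $S$-state is labelled $a$ or $a^{-1}$. For a state $(v,a^{-1})$ with $v\in S_o$: by case \ref{thm:case3} of Theorem \ref{finesse2} we have $L_v\subseteq A\cup\{a\}$, and Proposition \ref{pro:labels}(\ref{pro_case2}) gives the outgoing-label set $L_{(v,a^{-1})}=L_v\setminus\{a\}\subseteq A$, which contains neither $a$ nor $a^{-1}$ since $A\subseteq\Sigma\setminus\{a,a^{-1}\}$. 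For a state $(v',a)$ with $v'\in S_t$, write $v'=t(e)$ with $e\in E_o$, $o(e)=v\in S_o$: Proposition \ref{pro:labels}(\ref{pro_case1}) gives $L_v\cap L_{v'}=\emptyset$, and since $a\in L_v$ this forces $a\notin L_{v'}$; also $a^{-1}\in L_{v'}$ (it is the label of $\bar e$), which rules out case \ref{thm:case3} for $v'$ (that case needs $a\in L_{v'}$) and case $2$ of Theorem \ref{finesse2} (that case would force $a^{-1}\in A$), so $v'$ must satisfy case $1$, i.e. $L_{v'}\cap A=\emptyset$; then the outgoing-label set $L_{(v',a)}=L_{v'}\setminus\{a^{-1}\}$ again avoids $a$ and $a^{-1}$. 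Hence $O$ is the zero matrix.

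For item \ref{cor:C}, fix a non-$S$-state $(v,x_i^{\e})$. By determinism it has at most one outgoing edge labelled $a$ and at most one labelled $a^{-1}$, and by the observation above these are the only edges that can reach $S$. An edge labelled $a$ ends at $(va,a)$, which lies in $S$ only if $va\in S_t$, i.e. only if the edge of $\Delta_H$ leaving $v$ with label $a$ lies in $E_o$, which forces $v\in S_o$; symmetrically an edge labelled $a^{-1}$ ends at $(va^{-1},a^{-1})$, which lies in $S$ only if $va^{-1}\in S_o$, and tracing the reversed edge $\bar e$ this forces $v\in S_t$. But $S_o\cap S_t=\emptyset$: every vertex of $S_o$ has $a\in L_v$ (case \ref{thm:case3}), while every vertex of $S_t$ has $a\notin L_v$ by the computation of the previous paragraph. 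So at most one of the two candidate edges actually reaches $S$, and the corresponding row of $U$ has at most one nonzero entry, necessarily equal to $1$.

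The step I expect to cost the most care is the middle one: correctly matching the sets $L_v$ used in Ascari's trichotomy (Theorem \ref{finesse2}) with the $L_v$ appearing in the definition of $\B_H$ through $\widehat{\Delta}_H$, and pinning down that every vertex of $S_t$ satisfies case $1$ of that trichotomy — equivalently, that $S_o$ and $S_t$ are disjoint. Once that bookkeeping is settled, everything else is a short argument that uses only determinism and the explicit transition rule of $\B_H$.
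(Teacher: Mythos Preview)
Your proof is correct and follows essentially the same route as the paper, which also derives both items from the trichotomy of Theorem~\ref{finesse2} and the definition of $S$ (for item~\ref{cor:C} the paper argues by contradiction and appeals to Remark~5.9 of \cite{ascari2021fine}, while item~\ref{cor:O} is dispatched in one line as ``follows directly from the definition of the set $S$''). Your version is more explicit and self-contained---you compute the outgoing-label sets at $S$-states directly and establish $S_o\cap S_t=\emptyset$ from the trichotomy rather than by citation---and the only fact you leave implicit is the foldedness of the core graph (unique incoming edge of each label at every vertex), which is standard.
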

    \begin{proof}
        The decomposition of $M$ follows from NSE. 
            \begin{enumerate}
                \item Let the matrix $U$ has a row (referred to as the $(q,x^{\e})$-row) containing at least two entries equal to 1, i.e., there exists states $(q,x^{\e}) \in Q$, $(s,a^{\e})$, $(s',a^{-\e}) \in S$, and corresponding edges in $\B_H$:
                $$ \delta_H\big((q,x^{\e}),a^{\e}\big)= (s,a^{\e}) \textrm{ and } \delta_H\big((q,x^{\e}),a^{-\e}\big)= (s',a^{-\e}).$$
                From Equation \eqref{eqn:B_H_transition}, we have:
                $$ \delta_{\widehat{\Delta}_H}\left(q,a^{\e}\right)= s \textrm{ and } \delta_{\widehat{\Delta}_H}\left(q,a^{-\e}\right)= s'.$$
                Since $(q,x^{\e}) \in Q = Q_H\setminus S$, it implies that either $s \in S_o$ or $s' \in S_o.$ Without loss of generality, let $s \in S_o$. Then there exists an edge $e$ in $\Delta_H$ such that $o(e) = q$ and $\mu(e) = a^{\e}$. This implies that the set of labels of outgoing edges at $q$ in the graph $\widehat{\Delta}_H$ contains both $a^{\pm 1}$. However, from Remark 5.9 of \cite{ascari2021fine}, this contradicts the fact that $s \in S_o$.
                \item This statement follows directly from the definition of the set $S$.
            \end{enumerate}
    \end{proof}

    We now describe a procedure to transform the adjacency matrix $M$ of $\B_H$ into a new matrix $M_1$, which corresponds to the automaton $\B$ whose transition diagram is isomorphic to that of $\B_{\varphi(H)}$. This transformation uses the block decomposition of $M$ described in Corollary~\ref{cor:M_partition} and corresponds to the construction of automaton in Theorem~\ref{thm:B_from_BH}. The steps are as follows: 
    \begin{itemize} 
    \item[1.]  For each $(s,a^{\e}) \in S$, identify the associated row and column in $M$, denoted $R_{(s,a^{\e})}$ and $C_{(s,a^{\e})}$, respectively.
    \item[2.] In the column $C_{(s,a^{\e})}$, locate all nonzero entries (which are necessarily equal to 1). Each such entry corresponds to a row $R_{o(e_i)}$ in $M$, for $i = 1, \cdots, d$, where $d$ is the number of such entries. 
    \item[3.] For each $i$, replace the row $R_{o(e_i)}$ by the sum $R_{o(e_i)} + R_{(s,a^{\e})}$.
    \item[4.] After performing these additions, delete the row $R_{(s,a^{\e})}$ and the column $C_{(s,a^{\e})}$ from $M$. 
    \item[5.] Repeat this process for every $(s,a^{\e}) \in S$. The resulting matrix is $M_1$, a square matrix of size $|Q|\times |Q|$.
    \end{itemize} 

    \begin{mycor}\label{cor:M1_from_M}
    Let $M$ be the adjacency matrix of the automaton $\B_H$, and let $M_1$ be the $|Q|\times |Q|$ matrix obtained by applying the transformation procedure described above. Then $M_1$ is the adjacency matrix of the transition diagram of $\B$, which is isomorphic to $\B_{\varphi(H)}$.
    \end{mycor}
    \begin{proof}
    By Theorem~\ref{thm:B_from_BH}, the automaton $\B$ is isomorphic to $\B_{\varphi(H)}$, and its structure is obtained from $\B_H$ by eliminating each state $(s,a^{\e}) \in S$ and replacing it with direct transitions from each state $o(e_i)$, for $i = 1,\cdots,d$, to each state $t(\ee_j)$, for $j = 1,\cdots,d'$, where $e_i$ and $\ee_j$ are edges in the automaton $\B_H$ such that $t(e_i) = o(\ee_j) = (s,a^{\e})$. 

    To reflect this in the adjacency matrix, the row $R_{(s,a^{\e})}$ (which encodes transitions from $(s,a^{\e})$) is added to each row $R_{o(e_i)}$, thereby redirecting transitions from $o(e_i)$ directly to each $t(\ee_j)$. After this redistribution, the row and column corresponding to $(s,a^{\e})$ are removed.

    Repeating this process for every element of $S$ yields a matrix $M_1$ of size $|Q|\times |Q|$, which captures the transitions of the automaton $\B$.
    \end{proof}
    
    \subsection{The Inequality}
    We recall the following standard theorem (see Theorem 1.5 of \cite{bestvina1992train} or Theorem 1.6 on page 23 and Exercises 2.1 on page 39 of \cite{seneta2006non}), which holds significant importance in our analysis. It's important to clarify that by `vector' we refer to column vectors in the context of this discussion.
    \begin{mythm}\label{thm:Perron-Frobenius} (Perron-Frobenius). Suppose that $N$ is an irreducible, non-negative integral matrix. Then there is a unique positive eigenvector $\overrightarrow{w}$ of norm one for $N$, and its associated eigenvalue satisfies $\eta \geq 1$. If $\eta = 1$, then $N$ is a transitive permutation matrix. Moreover if $\overrightarrow{u}$ is a positive vector and $\beta > 0$ satisfies 
    $(N\overrightarrow{u})_i \leq \beta\overrightarrow{u}_i$ for each $i$ and 
    $(N\overrightarrow{u})_j < \beta\overrightarrow{u}_j$ for some $j$, then $\eta < \beta.$
    \end{mythm}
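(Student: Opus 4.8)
The statement is the classical Perron--Frobenius theorem, specialised to integral matrices, so the plan is to assemble standard arguments — the cited references supply full details — rather than to prove anything new; below I record the plan and isolate the one genuinely nontrivial ingredient.

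\emph{Existence and uniqueness of $\vec w$, and $\eta\geq 1$.} I would produce $\vec w$ and $\eta$ via the Collatz--Wielandt characterisation $\eta=\max\{\,\min_{i:\,x_i>0}(N\vec x)_i/x_i : \vec x\geq 0,\ \vec x\neq 0\,\}$: compactness of the standard simplex gives a nonnegative maximiser, and irreducibility upgrades it to a strictly positive honest eigenvector. Indeed, if some coordinate of a candidate vanished, then replacing $\vec x$ by $(I+N)^{n-1}\vec x$ — strictly positive because $(I+N)^{n-1}$ is — would strictly raise the Collatz--Wielandt ratio, a contradiction; the same device forces $N\vec w=\eta\vec w$ exactly, and normalising to norm one gives $\vec w$. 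For uniqueness I would pair a second positive eigenvector against the positive \emph{left} eigenvector to equate the two eigenvalues, then subtract the largest admissible positive multiple to obtain a nonnegative eigenvector with a vanishing coordinate, which the same positivity-improving step forces to be $0$. Finally, with $w_\ell=\min_i w_i$ one has $\eta w_\ell=(N\vec w)_\ell=\sum_j N_{\ell j}w_j\geq\bigl(\sum_j N_{\ell j}\bigr)w_\ell\geq w_\ell$, since $N$ is integral and irreducibility makes row $\ell$ nonzero, so $\sum_j N_{\ell j}\geq 1$; dividing by $w_\ell>0$ yields $\eta\geq 1$. Producing the \emph{strictly positive} eigenvector is the only genuinely nontrivial point; here I would either run the argument just sketched or simply invoke Theorem~1.6 of \cite{seneta2006non} or Theorem~1.5 of \cite{bestvina1992train}.

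\emph{The case $\eta=1$.} I would read off equality throughout the last chain: then $\sum_j N_{\ell j}=1$ and $w_j=w_\ell$ whenever $N_{\ell j}>0$; irreducibility propagates $w_j=w_\ell$ to every $j$, so $\vec w$ is constant and $N\vec w=\vec w$ forces every row of $N$ to sum to $1$. A nonnegative integral row summing to $1$ has exactly one nonzero entry, equal to $1$; applying the identical reasoning to $N^{T}$ — again irreducible and nonnegative, with Perron eigenvalue $1$ and constant positive eigenvector — gives a single $1$ in every column too, so $N$ is a permutation matrix, and an irreducible permutation matrix is a single cycle, i.e.\ $N$ is transitive.

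\emph{The subinvariance inequality.} This is the part the rest of the paper actually invokes (to compare $\lambda$ and $\lambda_1$), and it is short. Let $\vec v^{T}>0$ be the left Perron eigenvector, $\vec v^{T}N=\eta\,\vec v^{T}$, which exists because $N^{T}$ is again irreducible and nonnegative. Pairing the two hypotheses on $\vec u$ with $\vec v$,
\[
\eta\,(\vec v^{T}\vec u)=\vec v^{T}(N\vec u)=\sum_i v_i\,(N\vec u)_i\leq\sum_i v_i\,\beta u_i=\beta\,(\vec v^{T}\vec u),
\]
and the inequality is strict because the $j$-th summand satisfies $v_j(N\vec u)_j<v_j\beta u_j$ with $v_j>0$; since $\vec v^{T}\vec u=\sum_i v_iu_i>0$, dividing gives $\eta<\beta$. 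Hence the real work — and the expected obstacle — lies entirely in the classical existence of a strictly positive (left) eigenvector of the first paragraph; once that is in hand, both the $\eta\geq 1$ dichotomy and the subinvariance inequality are routine.
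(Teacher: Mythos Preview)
Your outline is correct and self-contained, but the paper does not actually prove this theorem: it is stated as a recalled standard result with citations to Theorem~1.5 of \cite{bestvina1992train} and Theorem~1.6 (page~23) together with Exercise~2.1 (page~39) of \cite{seneta2006non}, and no argument is given in the paper itself. So there is no ``paper's own proof'' to compare against --- you have supplied what the paper chose to outsource. Your approach (Collatz--Wielandt for existence and positivity, the row-sum argument for $\eta\geq 1$ and the $\eta=1$ dichotomy, and the left-eigenvector pairing for the subinvariance inequality) is exactly the standard route and matches what one finds in the cited references; in particular, the pairing argument you give for $\eta<\beta$ is precisely the content of Seneta's Exercise~2.1.
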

    
    Recall that $M = (m_{ij})$ and $M_1$ are the adjacency matrices of the deterministic and ergodic automata $\B_H$ and $\B$, respectively.  As a result, they are non-negative, integral (with entries either $0$ or $1$), and irreducible. Since $H$ is a non-cyclic free factor of $F_m$, and $\varphi(H)$ is as well, this implies that $\lambda > 1$ and $\lambda_1 > 1$. With all the required machinery in place, we are now ready to prove that the Perron-Frobenius eigenvalue of $M$ is strictly smaller than that of $M_1$.

    \begin{mythm} \label{thm:main_lambda<lambda1}
        Let $\lambda$ and $\lambda_1$ be the Perron-Frobenious eigenvalue of $M$ and $M_1$, respectively, where $M$ and $M_1$ are as described in Corollary \ref{cor:M1_from_M}. 
        Then $ \lambda < \lambda_1.$
    \end{mythm}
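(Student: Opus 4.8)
The plan is to apply the Perron–Frobenius criterion of Theorem \ref{thm:Perron-Frobenius} to $M_1$ using a test vector built from the Perron eigenvector of $M$. Let $\overrightarrow{w}$ be the positive eigenvector of $M$ with $M\overrightarrow{w} = \lambda \overrightarrow{w}$, indexed by the states of $\B_H$ according to NSE, so that its coordinates split as $\overrightarrow{w} = (\overrightarrow{u}; \overrightarrow{s})$, where $\overrightarrow{u} \in \R^{|Q|}$ corresponds to the non-$S$-states and $\overrightarrow{s} \in \R^{|S|}$ to the $S$-states. Using the block decomposition $M = \left(\begin{smallmatrix} M' & U \\ Z & O \end{smallmatrix}\right)$ from Corollary \ref{cor:M_partition}, the eigenvalue equation reads $M'\overrightarrow{u} + U\overrightarrow{s} = \lambda \overrightarrow{u}$ and $Z\overrightarrow{u} = \lambda \overrightarrow{s}$ (the bottom-right block being zero). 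The claim I would prove is that $\overrightarrow{u}$ itself serves as the test vector for $M_1$ with $\beta = \lambda$: it is positive, and I will show $M_1 \overrightarrow{u} \geq \lambda \overrightarrow{u}$ entrywise, with strict inequality in at least one coordinate, which by Theorem \ref{thm:Perron-Frobenius} forces $\lambda_1 > \lambda$.

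First I would make precise the relation between $M_1$ and the blocks of $M$. By the row-operation description in Corollary \ref{cor:M1_from_M}, for each $(q,a^\e)\in S$ one adds the $(q,a^\e)$-row of $M$ to each of the $d$ rows $R_{o(e_i)}$ and then deletes the $(q,a^\e)$-row and column. Since the $(q,a^\e)$-column of $M$ has its $d$ ones precisely in the rows $R_{o(e_i)}$, and since by Corollary \ref{cor:M_partition}\eqref{cor:O} the block $O$ is zero (so $S$-states point only to non-$S$-states), carrying this out over all of $S$ yields the identity
\begin{equation}
M_1 = M' + (\text{the matrix of the new edges}) = M' + R,
\end{equation}
where $R$ is the non-negative matrix whose action on $\overrightarrow{u}$ reconstructs exactly the contribution $U\overrightarrow{s}$ but rerouted: the new edges from $o(e_i)$ to $t(\ee_j)$ replace the two-step paths $o(e_i)\to (q,a^\e)\to t(\ee_j)$. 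Concretely, I expect to verify $R\,\overrightarrow{u} = U\overrightarrow{s}$, because for a state $(q,a^\e)\in S$ the coordinate $\overrightarrow{s}_{(q,a^\e)}$ equals $\tfrac{1}{\lambda}(Z\overrightarrow{u})_{(q,a^\e)} = \tfrac{1}{\lambda}\sum_i \overrightarrow{u}_{o(e_i)}$ — wait, more carefully: the key point is simply that the entry of $M_1\overrightarrow{u}$ in row $o(e_i)$ picks up $\sum_j \overrightarrow{u}_{t(\ee_j)}$ from the new edges, which is exactly $\lambda\,\overrightarrow{s}_{(q,a^\e)}$ via $Z\overrightarrow{u}=\lambda\overrightarrow{s}$ read in the appropriate coordinate, and this matches (and in fact exceeds, by positivity and multiplicity) the term $(U\overrightarrow{s})_{o(e_i)}$ that appeared in $\lambda\overrightarrow{u}_{o(e_i)}$.

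Assembling this: for a non-$S$-state row $k$ that is not some $o(e_i)$, no new edges are added and no $S$-column contributed to row $k$ of $M$ in the first place (this is Corollary \ref{cor:M_partition}\eqref{cor:C} together with the fact that $o(e)$ is the only state with an outgoing $a^\e$-edge into $(q,a^\e)$ — indeed the rows of $U$ have at most a single $1$), so $(M_1\overrightarrow{u})_k = (M'\overrightarrow{u})_k = \lambda\overrightarrow{u}_k$. For $k = o(e_i)$, we get $(M_1\overrightarrow{u})_k = (M'\overrightarrow{u})_k + \sum_{j=1}^{d'}\overrightarrow{u}_{t(\ee_j)}$, and since $(M'\overrightarrow{u})_k = \lambda\overrightarrow{u}_k - (U\overrightarrow{s})_k$ with $(U\overrightarrow{s})_k$ being a single coordinate of $\overrightarrow{s}$, while $\sum_j \overrightarrow{u}_{t(\ee_j)}$ is a sum of $d'\geq 1$ strictly positive terms, a short comparison using $\lambda\overrightarrow{s}_{(q,a^\e)} = \sum_{\ell}\overrightarrow{u}_{\text{(predecessors)}}$ gives $(M_1\overrightarrow{u})_k \geq \lambda\overrightarrow{u}_k$ with strict inequality; by Remark \ref{rem:M_M1} there is at least one such strict coordinate. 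Then Theorem \ref{thm:Perron-Frobenius} (applied to the irreducible non-negative integral $M_1$, with $\overrightarrow{u}$ positive and $\beta=\lambda$) yields $\lambda_1 < \lambda$ — sorry, yields $\lambda_1 > \lambda$: the theorem states that if $(N\overrightarrow{u})_i \le \beta \overrightarrow{u}_i$ for all $i$ with strict inequality somewhere then the Perron eigenvalue is $<\beta$, so to conclude $\lambda_1 > \lambda$ I apply it in the reversed form (equivalently to $-$; or by the standard monotonicity consequence), obtaining the strict inequality $\lambda < \lambda_1$.

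\textbf{Main obstacle.} The delicate step is the bookkeeping in the second paragraph: showing that collapsing genuinely \emph{increases} the relevant row sums rather than merely preserving them. The subtlety is that in $M$ the column $C_{(q,a^\e)}$ has $d$ ones (one per $e_i$) while the row $R_{(q,a^\e)}$ has $d'$ ones (one per $\ee_j$), and after collapse each $o(e_i)$ inherits a full copy of $R_{(q,a^\e)}$; one must check that the single contribution $(U\overrightarrow{s})_{o(e_i)}$ that was present before is strictly dominated by $\sum_{j}\overrightarrow{u}_{t(\ee_j)}$, which requires $d'\geq 2$ at the strict coordinate or, when $d'=1$, that the multiplicity/rerouting still produces a strict gain — this is exactly the content of Remark \ref{rem:M_M1} guaranteeing a strict coordinate exists, so I would lean on that remark (which is itself a consequence of Corollary \ref{cor:M1_from_M}) to pin down where strictness occurs, and keep the rest as an entrywise $\geq$ estimate.
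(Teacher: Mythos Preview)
Your route is \emph{dual} to the paper's: the paper fixes the Perron eigenvector $\overrightarrow{v}$ of $M_1$, \emph{extends} it to a positive vector $\overrightarrow{u}\in\R^{|Q|+|S|}$, and verifies $(M\overrightarrow{u})_j\le\lambda_1\overrightarrow{u}_j$ with a strict coordinate, invoking Theorem~\ref{thm:Perron-Frobenius} directly to get $\lambda<\lambda_1$. You instead fix the Perron eigenvector of $M$, \emph{restrict} to $\overrightarrow{u}\in\R^{|Q|}$, and aim for $(M_1\overrightarrow{u})_k\ge\lambda\overrightarrow{u}_k$ with a strict coordinate. Both strategies are viable; the paper's has the advantage that Theorem~\ref{thm:Perron-Frobenius} applies verbatim, whereas you need the reversed Collatz--Wielandt inequality (standard, but your aside ``apply it to $-$'' is wrong, since $-M_1$ is not non-negative).

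However, your strictness argument has a genuine gap. Carry your own bookkeeping to its conclusion: with $M'\overrightarrow{u}+U\overrightarrow{s}=\lambda\overrightarrow{u}$ and $Z\overrightarrow{u}=\lambda\overrightarrow{s}$, the row operation of Corollary~\ref{cor:M1_from_M} gives, at a row $k=o(e_i)$ with unique $S$-neighbour $(q,a^{\e})$,
\[
(M_1\overrightarrow{u})_k \;=\;(M'\overrightarrow{u})_k+(Z\overrightarrow{u})_{(q,a^{\e})}
\;=\;\bigl(\lambda\overrightarrow{u}_k-\overrightarrow{s}_{(q,a^{\e})}\bigr)+\lambda\,\overrightarrow{s}_{(q,a^{\e})}
\;=\;\lambda\overrightarrow{u}_k+(\lambda-1)\,\overrightarrow{s}_{(q,a^{\e})}.
\]
So the excess is exactly $(\lambda-1)\overrightarrow{s}_{(q,a^{\e})}$, and strictness is equivalent to $\lambda>1$. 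Your appeal to ``$d'\ge2$'' or to Remark~\ref{rem:M_M1} is misplaced: the comparison $\sum_j\overrightarrow{u}_{t(\ee_j)}$ versus $(U\overrightarrow{s})_k$ is precisely $\lambda\overrightarrow{s}_{(q,a^{\e})}$ versus $\overrightarrow{s}_{(q,a^{\e})}$ regardless of $d'$, and Remark~\ref{rem:M_M1} only says $M_1\ge M'$ entrywise, which does not by itself control $M_1\overrightarrow{u}$ against $\lambda\overrightarrow{u}$. (Relatedly, your tentative identity $R\overrightarrow{u}=U\overrightarrow{s}$ is false; the correct relation is $R\overrightarrow{u}=\lambda\,U\overrightarrow{s}$.) The missing ingredient, $\lambda>1$, does hold here: by Convention~\ref{con1} the subgroup $H$ is non-cyclic, so some vertex of $\widehat{\Delta}_H$ has $|L_v|\ge3$, hence $\B_H$ has a state of out-degree $\ge2$ and $M$ is not a permutation matrix; Theorem~\ref{thm:Perron-Frobenius} then forces $\lambda>1$. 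Once you insert this step and cite the reversed Perron--Frobenius bound properly, your argument goes through.
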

    \begin{proof} 
    Recall that $Q_H = Q \cup S$. We use the notation $(q, x_k^{\e})$ for states in $Q$ and $(s, a^{\e})$ for states in $S$. Let $R_{(v, x_k^{\e})}$ and $C_{(v, x_k^{\e})}$ denote the rows and columns of the matrix $M$ corresponding to the state $(v, x_k^{\e}) \in Q_H$, and let $R'_{(q, x_k^{\e})}$ and $C'_{(q, x_k^{\e})}$ denote the rows and columns of the matrix $M_1$ corresponding to the state $(q, x_k^{\e}) \in Q$.

    Choose a positive vector $\overrightarrow{v}$ in $\mathbb{R}^{|Q|}$ so that $M_1\overrightarrow{v} = \lambda_1\overrightarrow{v}$, and let $\overrightarrow{u}$ be the vector in $\mathbb{R}^{|Q|+|S|}$ defined by $u_{(q,x_k^{\e})} = v_{(q,x_k^{\e})}$ for ${(q,x_k^{\e})} \in Q$. We will determine the remaining $|S|$ components of $\overrightarrow{u}$ based on the block decomposition of $M$ described in Corollary  \ref{cor:M_partition}.
    
    By Theorem \ref{thm:B_from_BH}, in the column $C_{(s,a^{\e})}$ in $M$, there are $d$ entries with the value $1$ and the rest are $0$. The rows in which these $d$ entries appear are $R_{o(e_i)}$, respectively, where $e_i, i = 1,\cdots,d$, and are the edges in the transition graph of $\B_H$ terminating at $(s,a^{\e})$ with label $a^{\e}.$ For each $o(e_i) \in Q, i = 1,\cdots,d$, we use Corollary \ref{cor:M_partition} to write 
     $$(M \overrightarrow{u})_{o(e_i)} = b_i + u_{(s,a^{\e})},$$  
     where $b_i = \displaystyle \sum_{(q,x_k^{\e'}) \in Q} m_{o(e_i)(q,x_k^{\e'})} u_{(q,x_k^{\e'})}.$ Also, for $(s,a^{\e}) \in S$, we write 
     $$(M \overrightarrow{u})_{(s,a^{\e})} = \sum_{(q,x_k^{\e'}) \in Q} m_{(s,a^{\e})(q,x_k^{\e'})} u_{(q,x_k^{\e'})} = b_{(s,a^{\e})}.$$
     
    By Corollary \ref{cor:M1_from_M}, the corresponding rows $R'_{o(e_i)}$ of $M_1$ can be obtained from $M$ by applying row addition 
    $$R_{o(e_i)} + R_{(s,a^{\e})} \rightarrow R_{o(e_i)},$$ 
    where $i = 1, \dots, d$, and removing rows $R_{(s,a^{\e})}$ and columns $C_{(s,a^{\e})}$ from the resultant, where $(s,a^{\e}) \in S$. 

    Using the above relationships between rows $R_{o(e_i)}, R_{(s,a^{\e})},$ and the column $C_{(s,a^{\e})}$ of $M$, and the corresponding rows $R'_{o(e_i)}$ of $M_1$, for $i = 1,\cdots,d$, we obtain
    $$(M \overrightarrow{u})_{o(e_i)} + (M \overrightarrow{u})_{(s,a^{\e})} - u_{(s,a^{\e})} = (M_1 \overrightarrow{v})_{o(e_i)} = \lambda_1 u_{o(e_i)}.$$

    Substituting the expressions for $(M \overrightarrow{u})_{o(e_i)}$ and $(M \overrightarrow{u})_{(s,a^{\e})}$, we obtain
    $$b_{(s,a^{\e})} = \lambda_1 u_{o(e_i)} - b_i.$$
    Since $b_i > 0$ and $\lambda_1 u_{o(e_i)} > b_i$, it follows that $0 < b_{(s,a^{\e})} < \lambda_1 u_{o(e_i)}.$
    To ensure that $M \overrightarrow{u} \leq \lambda_1 \overrightarrow{u}$, we require
    $$(M \overrightarrow{u})_{o(e_i)} = b_i + u_{(s,a^{\e})} \leq \lambda_1 u_{o(e_i)} \quad \text{and} \quad (M \overrightarrow{u})_{(s,a^{\e})} = b_{(s,a^{\e})} \leq \lambda_1 u_{(s,a^{\e})}.$$
    Thus, we need to choose $u_{(s,a^{\e})}$ such that $$\frac{b_{(s,a^{\e})}}{\lambda_1} \leq u_{(s,a^{\e})} \leq b_{(s,a^{\e})}.$$
    This interval is non-empty, since $b_{(s,a^{\e})} > 0$ and $\lambda_1 > 1$. Therefore, for each $(s,a^{\e}) \in S$, we choose 
    $u_{(s,a^{\e})} = \frac{b_{(s,a^{\e})}}{\lambda_1}$. Then, 
    $$(M \overrightarrow{u})_{(s,a^{\e})} = b_{(s,a^{\e})} = \lambda_1 u_{(s,a^{\e})}, \quad (M \overrightarrow{u})_{o(e_i)} = b_i + u_{(s,a^{\e})} < \lambda_1 u_{o(e_i)}.$$
    Hence, for this choice, we have $(M \overrightarrow{u})_{(v, x^{\e})} \leq \lambda_1 u_{(v, x^{\e})}$ for all $_{(v, x^{\e})} \in Q_H$, with strict inequality for $(v, x^{\e}) = o(e_i),$ where $ i = 1,\cdots, d$. 
    By Theorem \ref{thm:Perron-Frobenius}, we conclude that $\lambda < \lambda_1.$ This completes the proof.
\end{proof}

The following Corollary follows from Theorem \ref{thm-on-computing-entropy} and Theorem \ref{thm:main_lambda<lambda1}. 
\begin{mycor}\label{cor:Entropy_inequality} 
    Let $\varphi$ be the automorphism given by Theorem \ref{thm:B_from_BH}. Then $$ent(L_H) < ent(L_{\varphi(H)}),$$ where $ent(L_H) = \log \lambda$ and $ent(L_{\varphi(H)}) = \log \lambda_1$.
\end{mycor}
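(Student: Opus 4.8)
The plan is to obtain Corollary \ref{cor:Entropy_inequality} as an immediate consequence of two facts already in hand: Theorem \ref{thm-on-computing-entropy}, which identifies the entropy of the relevant language with the logarithm of a Perron--Frobenius eigenvalue, and Theorem \ref{thm:main_lambda<lambda1}, which supplies the strict eigenvalue inequality $\lambda < \lambda_1$. So the proof is a short deduction; the only substantive bookkeeping is to make sure the hypotheses of Theorem \ref{thm-on-computing-entropy} are met for both $H$ and $\varphi(H)$, and that the matrix called $M_1$ in Theorem \ref{thm:main_lambda<lambda1} is genuinely the adjacency matrix of $\B_{\varphi(H)}$.

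First I would check applicability of Theorem \ref{thm-on-computing-entropy}. Its hypothesis is Convention \ref{con1}: the subgroup is non-trivial, non-cyclic and finitely generated, with cyclically reduced generators. For $H$ this is assumed throughout. For $\varphi(H)$, since $\varphi$ is an automorphism of $F_m$, $\varphi(H)$ is again finitely generated, non-trivial, and of the same rank as $H$ (hence non-cyclic); it is a free factor by the construction of $\varphi$ in Theorem \ref{Whitehead2}, and one may replace its generating set by cyclically reduced words. Thus Theorem \ref{thm-on-computing-entropy} applies to both, giving $ent(L_H) = \log\lambda_H$ and $ent(L_{\varphi(H)}) = \log\lambda_{\varphi(H)}$. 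Recalling the notation fixed in Section \ref{sec:intro} and in Corollary \ref{cor:M1_from_M}, namely $\lambda = \lambda_H$ (the Perron--Frobenius eigenvalue of $M$) and $\lambda_1 = \lambda_{\varphi(H)}$ (that of $M_1$), and using that Theorem \ref{thm:B_from_BH} identifies $\B$ with $\B_{\varphi(H)}$ up to relabelling of states, so $M_1$ is indeed the adjacency matrix of $\B_{\varphi(H)}$, this reads $ent(L_H) = \log\lambda$ and $ent(L_{\varphi(H)}) = \log\lambda_1$, which is the parenthetical assertion of the corollary.

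Second I would invoke Theorem \ref{thm:main_lambda<lambda1} to get $\lambda < \lambda_1$. Both eigenvalues are positive --- in fact at least $1$ by Theorem \ref{thm:Perron-Frobenius}, since $M$ and $M_1$ are non-negative, integral and irreducible --- and $\log$ is strictly increasing on $(0,\infty)$, so $\log\lambda < \log\lambda_1$, i.e. $ent(L_H) < ent(L_{\varphi(H)})$, as claimed.

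I do not expect a genuine obstacle here; the only point needing a moment's care is precisely the identification mentioned above --- that the "$\lambda_1$" of Theorem \ref{thm:main_lambda<lambda1} (the eigenvalue of the collapsed matrix $M_1$ from Corollary \ref{cor:M1_from_M}) is the same quantity whose logarithm Theorem \ref{thm-on-computing-entropy} declares to be $ent(L_{\varphi(H)})$. Since that identification is exactly what the Myhill--Nerode argument at the end of the proof of Theorem \ref{thm:B_from_BH} establishes, the corollary follows at once.
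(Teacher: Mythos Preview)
Your proposal is correct and follows essentially the same approach as the paper: the corollary is stated there as an immediate consequence of Theorem \ref{thm-on-computing-entropy} and Theorem \ref{thm:main_lambda<lambda1}, and your additional bookkeeping (checking Convention \ref{con1} for $\varphi(H)$ and identifying $M_1$ as the adjacency matrix of $\B_{\varphi(H)}$ via Theorem \ref{thm:B_from_BH}) simply makes explicit what the paper leaves implicit.
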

The subsequent Corollary follows from Equation \eqref{eqn:cogrowth_entropy} and Theorem \ref{thm:main_lambda<lambda1}. 
\begin{mycor} \label{cor:cogrowthrates_inequality}
Let $\varphi$ be the automorphism given by Theorem \ref{thm:B_from_BH}. Then $$\alpha_H < \alpha_{\varphi(H)},$$ where 
$\alpha_H$ and $\alpha_{\varphi(H)}$ represent the cogrowths of $H$ and $\varphi(H)$, respectively.
\end{mycor}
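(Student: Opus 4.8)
The plan is to obtain the inequality by concatenating two facts already in hand: the identification $\alpha_H = \lambda_H$ of cogrowth with a Perron--Frobenius eigenvalue recorded in Equation \eqref{eqn:cogrowth_entropy}, and the strict spectral inequality $\lambda < \lambda_1$ proved in Theorem \ref{thm:main_lambda<lambda1}. No new estimate is required.

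In order, I would proceed as follows. First, apply Equation \eqref{eqn:cogrowth_entropy} — equivalently Theorem \ref{thm-on-computing-entropy} together with the definitions \eqref{eqn:entropy_def} and \eqref{eqn:rate_cogrowth_def} — to $H$, obtaining $\alpha_H = e^{ent(L_H)} = \lambda_H = \lambda$. Second, record the analogous identity for $\varphi(H)$: the proof of Theorem \ref{thm:B_from_BH} already exhibits $\B_{\varphi(H)}$ as an ergodic, deterministic automaton recognizing $L_{\varphi(H)}$ whose transition diagram is isomorphic to that of $\B$, and whose adjacency matrix is therefore permutation-similar to the irreducible matrix $M_1$ of Corollary \ref{cor:M1_from_M}; consequently the entropy argument behind Theorem \ref{thm-on-computing-entropy} and the chain \eqref{eqn:cogrowth_entropy} apply verbatim to $\varphi(H)$, giving $\alpha_{\varphi(H)} = e^{ent(L_{\varphi(H)})} = \lambda_{\varphi(H)} = \lambda_1$. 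Third, invoke Theorem \ref{thm:main_lambda<lambda1} for $\lambda < \lambda_1$. Combining the three, $\alpha_H = \lambda < \lambda_1 = \alpha_{\varphi(H)}$, which is the assertion.

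I expect no genuine obstacle: the entire weight of the result rests on Theorem \ref{thm:main_lambda<lambda1}, established through the Perron--Frobenius comparison criterion of Theorem \ref{thm:Perron-Frobenius}, and on the structural description of $\B_{\varphi(H)}$ in Theorem \ref{thm:B_from_BH}, both of which are taken as given. The only points needing care are of bookkeeping type — that $\varphi(H)$, being again a non-trivial, non-cyclic, finitely generated free factor, legitimately supports the cogrowth-equals-eigenvalue formula, and that replacing $\B$ by the canonical minimal automaton $\B_{\varphi(H)}$ (via the Myhill--Nerode isomorphism in the proof of Theorem \ref{thm:B_from_BH}) does not alter the Perron--Frobenius eigenvalue, since permutation-similar matrices are isospectral.
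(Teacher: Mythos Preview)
Your proposal is correct and mirrors the paper's own justification, which simply cites Equation \eqref{eqn:cogrowth_entropy} together with Theorem \ref{thm:main_lambda<lambda1}. The extra bookkeeping you include (that $\varphi(H)$ again satisfies the hypotheses so that \eqref{eqn:cogrowth_entropy} applies, and that the Myhill--Nerode isomorphism preserves the Perron--Frobenius eigenvalue) is accurate and only makes explicit what the paper leaves implicit.
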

\section{Example} \label{sec:exa}
Recall our free factor $H = \langle yx^{-1}, yzy^{-1}zt\rangle$ of the free group $F\langle x,y,z,t \rangle$. Following Theorems \ref{thm:B_m_properties} and \ref{thm:B_from_BH}, as well as Corollary \ref{cor:M1_from_M}, we construct the automata $\B_H$ and $\B_{\varphi(H)}$, illustrated in Figures \ref{fig:B_H} and \ref{fig:B_PhiH} respectively. Using Corollary \ref{cor:M1_from_M}, one can obtain $M_1$ from the below given matrix $M$. 
\NiceMatrixOptions%
 {code-for-first-row = \color{gray}\scriptstyle \rotate \text{},
 code-for-last-col = \color{gray}\scriptstyle }
 \[M = \begin{pNiceMatrix}[first-row,vlines = 11,last-col=13]
 (2,x) & (1,x^{-1}) & (2,z^{-1}) & (1,t) & (3,y) & (3,z) & (4,y^{-1}) & (4,z^{-1}) & (5,z) & (5,t^{-1}) & (2,y) & (1,y^{-1})\\
 0 & 0 & 0 & 0 & 0 & 1 & 0 & 0 & 0 & 0 & 0 & 1 & (2,x)\\
 0 & 0 & 0 & 0 & 0 & 0 & 0 & 0 & 0 & 1 & 1 & 0 & (1,x^{-1})\\
 0 & 1 & 0 & 0 & 0 & 0 & 0 & 0 & 0 & 0 & 0 & 1 & (2,z^{-1})\\
 1 & 0 & 0 & 0 & 0 & 0 & 0 & 0 & 0 & 0 & 1 & 0 & (1,t)\\
 0 & 0 & 1 & 0 & 0 & 0 & 0 & 0 & 0 & 0 & 0 & 0 & (3,y)\\
 0 & 0 & 0 & 0 & 0 & 0 & 1 & 0 & 0 & 0 & 0 & 0 & (3,z)\\
 0 & 0 & 0 & 0 & 0 & 0 & 0 & 0 & 1 & 0 & 0 & 0 & (4,y^{-1})\\
 0 & 0 & 0 & 0 & 1 & 0 & 0 & 0 & 0 & 0 & 0 & 0 & (4,z^{-1})\\
 0 & 0 & 0 & 1 & 0 & 0 & 0 & 0 & 0 & 0 & 0 & 0 & (5,z)\\
 0 & 0 & 0 & 0 & 0 & 0 & 0 & 1 & 0 & 0 & 0 & 0 & (5,t^{-1})\\
 \hline
 0 & 1 & 0 & 0 & 0 & 1 & 0 & 0 & 0 & 0 & 0 & 0 & (2,y)\\
 1 & 0 & 0 & 0 & 0 & 0 & 0 & 0 & 0 & 1 & 0 & 0 & (1,y^{-1})
\end{pNiceMatrix}\]
    The Perron Frobenius eigenvalue of $M_1$ is $\lambda_1 = 1.64$ the transpose $\overrightarrow{v}^t$ of the associated eigenvector $\overrightarrow{v}$ of $M_1$ is 
    $$(\overrightarrow{v})^t = \left( 3.12 , 3.12 , 4.41 , 4.41 , 2.69 , 1 , 1.64 , 1.64 , 2.69 , 1 \right).$$    
    From Theorem \ref{thm:main_lambda<lambda1}, we define the vector $\overrightarrow{u}$ by setting $u_i = v_i$ for $i = 1,\cdots,10$ and choosing 
    $v_{11} = v_{12}  = 2.51$ since $b_{(2,y)} = b_{(1,y^{-1})} = 4.12.$ 
    This gives $$(M\overrightarrow{u})_i < (\lambda_1\overrightarrow{u})_i$$ for $i = 1,2,3,4$ and the equality $(M\overrightarrow{u})_i = (\lambda_1\overrightarrow{u})_i$ for $i = 5,\cdots,12$. Thus, the Perron Frobenius eigenvalue $\lambda$ of $M$, is strictly less than $\lambda_1$, and in this case, $\lambda = 1.45$.
\section{Open Problem}
The results established in this article suggest several directions for further exploration. We pose the following open questions:

\begin{myque}
Can similar cogrowth inequalities be formulated for conjugacy classes of subgroups, using restricted core graphs (obtained by removing the possible hair ending at the root in the core graph) as the natural invariant? 
\end{myque}

\begin{myque}
Is it possible to extend Whitehead-type reductions to tuples of subgroups? If so, how do these reductions interact with product automorphisms and their associated graphs? What new insights can be gained by studying cogrowth in this extended context?
\end{myque}

\begin{myque}
Can the techniques discussed in this article be extended to arbitrary finitely generated subgroups of $F_m$, or are there inherent limitations that restrict the results to non-cyclic free factors? In particular, can tools such as the Marshall Hall Theorem or properties of finite-index subgroups be used to generalize the inequality between the Perron–Frobenius eigenvalues of the associated matrices $M$ and $M_1$?
\end{myque}

\begin{myque}
Is it possible to prove Corollary \ref{cor:Entropy_inequality} using methods inspired by Gromov, thus avoiding the use of Perron–Frobenius theory? How would this alternative approach compare with the current proof in terms of technique and applicability, as discussed in \cite{CECCHERINISILBERSTEIN200393}?
\end{myque}

Building on the Whitehead maximal cogrowth problem mentioned in the introduction, another open question arises regarding the effectiveness of certain automorphisms. The cogrowth inequality suggests that among all images of a non-cyclic free factor $H$ under automorphisms of $F_m$, those subgroups whose core graphs have a single vertex achieve the highest cogrowth (or entropy).

\begin{myque} Given a finitely generated subgroup $H \leq F_m$, can one determine an automorphism $\varphi \in Aut(F_m)$ such that the cogrowth $\alpha_{\varphi(H)}$ is maximized over the automorphic orbit of $H$? Is there an effective method or algorithm to find such an automorphism? 

\end{myque}
\section{Acknowledgments}
The author would like to thank the anonymous referees for their valuable suggestions, including insightful open problems, which significantly improved the exposition and depth of the paper.
\bibliographystyle{plain} 

\end{document}